\newtheorem{thm}{Theorem}[section]
\newtheorem{cor}[thm]{Corollary}
\newtheorem{lem}[thm]{Lemma}
\newtheorem{prop}[thm]{Proposition}
\theoremstyle{definition}
\newtheorem{ex}[thm]{Example}
\newtheorem{nota}[thm]{Notation}
\newtheorem{defn}[thm]{Definition}
\newtheorem{conv}[thm]{Convention}
\theoremstyle{remark}
\newtheorem{rem}[thm]{Remark}
\newcommand{\CR}{{\mathcal R}}
\newcommand{\BR}{{\mathbb R}}
\newcommand{\BQ}{{\mathbb Q}}
\newcommand{\BG}{{\mathbb G}}
\newcommand{\BP}{{\mathbb P}}
\newcommand{\BZ}{{\mathbb Z}}
\newcommand{\Lin}{{\mathrm{Lin}}}
\newcommand{\val}{{\mathrm{val}}}
\newcommand{\trop}{{\mathrm{Trop}}}
\newcommand{\codim}{{\mathrm{codim}}}
\newcommand{\rec}{{\mathrm{Rec}}}
\newcommand{\simb}{{\stackrel b\sim}}
\begin{document}

\address[Xiang He]{Yau Mathematical Sciences Center, Tsinghua University, Haidian District, Beijing, China, 100084}
\email{xianghe@mail.tsinghua.edu.cn}
\thanks{The author is supported by the National Key R$\&$D Program of China 2022YFA1007100 and the NSFC grant 12301057.}

\title{Positivity of tropical multidegrees}
\author{Xiang He}
\maketitle

\begin{abstract}
Let $\Gamma$ be a tropical variety in $\BR^m=\BR^{m_1}\times\cdots\times\BR^{m_k}$. We show that, under a certain condition, the positivity of the stable intersection of $\Gamma$ with certain tropical varieties pulled back from each $\BR^{m_i}$ is governed by the dimensions of the images of $\Gamma$ under all possible projections from $\BR^m$. As an application, we give a tropical proof of the criterion of the positivity of the multidegrees of a closed subscheme of a multi-projective space, carried out in the paper \cite{castillo2020multidegrees} by Castillo et al.
\end{abstract}

\section{Introduction}

Let $\BP=\BP^{m_1}\times \cdots \times\BP^{m_k}$ be a multiprojective space over an algebraically closed field, and $X\subset \BP$ a irreducible closed subscheme. For each tuple $\mathbf n=(n_1,\dotsc,n_k)\in\BZ_{\geq 0}^k$ such that $n_i\leq m_i$ and $n_1+\cdots+ n_k=\dim X$, the \textit{multidegree} $\deg^{\mathbf n}_\BP(X)$ of $X$ of \textit{type} $\mathbf n$ is defined as the intersection number of $X$ with the pullback of a linear subspace of codimension $n_i$ in each $\BP^{m_i}$.
The multidegrees are fundamental invariants of $X$ that describe the class of $X$ in the Chow ring of $\mathbb P$. The study of multidegrees dates back to the works on Hilbert functions of van der Waerden \cite{waerden1929hilbert} and Bhattacharya \cite{bhattacharya1957hilbert}; and the positivity of the multidegrees was considered, for instance, in \cite{herrmann1997reduction,huh2012milnor,katz1994hilbert,kirby1994multiplicities,trung2001positivity}, in terms of mixed multiplicities.

For each subset $I$ of $[k]$ (Notation~\ref{nt:[k]}), denote $n_I=\sum_{i\in I}n_i$. 
Recently, Castillo, Cid-Ruiz, Li, Monta\~{n}o, and Zhang gave a criterion for the positivity of $\deg^{\mathbf n}_\BP(X)$ in terms of the dimensions of the images of $X$ under the projection maps $p_I\colon\mathbb \BP\rightarrow \prod_{i\in I}\BP^{m_i}$:
\begin{thm}$($\cite[Theorem A]{castillo2020multidegrees}$)$\label{thm:castillo}
    We have $\deg^{\mathbf n}_\BP(X)>0$ if and only if $\dim p_I(X)\geq n_I$ for all $I\subset [k]$.
\end{thm}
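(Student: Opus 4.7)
My plan is to reduce Theorem~\ref{thm:castillo} to the main tropical positivity theorem of this paper (announced in the abstract) via tropicalization. The strategy has three stages: (i) rewrite $\deg^{\mathbf n}_\BP(X)$ as a tropical stable intersection, (ii) apply the tropical positivity criterion to $\Gamma := \trop(X)$, and (iii) translate the resulting tropical dimension inequalities back to the algebraic ones.

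For (i), I would fix the standard toric embedding of each $\BP^{m_i}$ and set $\Gamma := \trop(X) \subset \BR^m$ with $m = m_1+\cdots+m_k$. By the foundational results of tropical intersection theory (Osserman--Payne and related work on tropical multiplicities), $\Gamma$ is a pure, balanced, weighted polyhedral complex of dimension $\dim X$, and the multidegree $\deg^{\mathbf n}_\BP(X)$ equals the stable intersection of $\Gamma$ with $\pi_1^{-1}(L_1),\ldots,\pi_k^{-1}(L_k)$, where each $L_i\subset \BR^{m_i}$ is the tropicalization of a generic codimension-$n_i$ linear subspace of $\BP^{m_i}$. For (ii), I would invoke the main theorem of the paper for this $\Gamma$: it states that the above stable intersection is positive if and only if $\dim p_I(\Gamma) \geq n_I$ for every $I \subset [k]$, provided $\Gamma$ satisfies the "certain condition" referenced in the abstract. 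For (iii), since each $p_I$ is a coordinate projection compatible with the toric embedding, one has $p_I(\Gamma) = \trop(p_I(X))$, and the Bieri--Groves theorem then yields $\dim p_I(\Gamma) = \dim p_I(X)$, finishing the proof.

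The main obstacle will be Step (ii): one must verify that $\trop(X)$ satisfies the technical hypothesis of the paper's main theorem. For irreducible $X$, the tropicalization $\Gamma$ is connected through codimension one and carries positive multiplicities, which I expect to be precisely the input the hypothesis requires; pinning this down, together with handling the degenerate cases where $n_i = 0$ or $n_i = m_i$ (so that some $L_i$ is all of $\BR^{m_i}$ or a single point), will be the technical heart of the argument. Steps (i) and (iii) are essentially standard functoriality statements for tropicalization under closed embeddings and coordinate projections, and should not present serious difficulty.
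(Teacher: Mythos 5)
Your overall strategy matches the paper's proof of Theorem~\ref{thm:castillo} (via Corollary~\ref{cor:positivity in the algebraic case}): tropicalize, reduce the multidegree to a tropical stable intersection number via Osserman--Payne lifting, apply the tropical positivity criterion, and pull the dimension inequalities back. Steps (i) and (iii) are essentially what the paper does, modulo the usual care about choosing hyperplane representatives transverse to torus orbits and invoking Kleiman transversality to get a proper intersection before applying Theorem~\ref{thm:lifting tropical intersections}.

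There is, however, a genuine gap in Step (ii), and it is in the part you flagged as "the technical heart" but then guessed wrong. You conjecture that the hypothesis of the tropical positivity theorem is "connected through codimension one with positive multiplicities." That is \emph{not} the hypothesis of Theorem~\ref{thm:positivity tropical cycle}; the correct condition is being \emph{translation-admissible} (Definition~\ref{defn:translation-admissible}), i.e.\ $\Gamma + V$ remains pure-dimensional (hence a tropical variety) for every rational linear subspace $V$. These two conditions are not equivalent, and this is not a pedantic point: the paper's Example~\ref{ex:non translation-admissible} exhibits a tropical variety that \emph{is} connected through codimension one, has positive weights, and nonetheless violates Theorem~\ref{thm:positivity tropical cycle}. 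So if you take "connected through codimension one" as the input to the black box, the argument simply does not go through. The fix is to verify translation-admissibility of $\trop(X)$ directly: for an irreducible closed $X \subset T$ and any integer vector $\vec v$, the orbit closure $\overline{\BG_m \cdot X}$ (via the cocharacter $t \mapsto (t^{a_1},\dots,t^{a_m})$) is again an irreducible closed subvariety whose tropicalization is $\trop(X) + \BR\vec v$, and iterating over generators of $V$ gives the claim (Lemma~\ref{lem:tropicalization translation-admissible}). This is the actual bridge from irreducibility of $X$ to the hypothesis of the tropical theorem, and it is the one ingredient your outline is missing.
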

In characteristic 0, this follows from the results of Kaveh and Khovanskii \cite{kaveh2016complete} (see also \cite[Remark 3.15]{castillo2020multidegrees}), and is later extended to the K\"ahler setting by Hu and Xiao \cite{hu2022hard}. In this paper, we consider a tropical version of the problem. 

The main objects we study in this paper are \textit{translation-admissible} tropical varieties in $\CR:=\BR^{m_1}\times\cdots\times\BR^{m_k}$, see Definition~\ref{defn:translation-admissible}. These are just tropical varieties that remain tropical varieties after taking the Minkowski sum with any rational linear subspace of $\CR$. This property should be regarded as an analogue of the irreducibility of schemes. Moreover, the tropicalization of an irreducible variety is translation-admissible (Lemma~\ref{lem:tropicalization translation-admissible}), but the opposite is not necessarily true (Remark~\ref{rem:translation-admissible not algebraic}).

Let $\Gamma\subset\CR$ be a translation-admissible tropical variety, and let $\Lambda_i\subset \BR^{m_i}$ be a fixed positive tropical divisor (as an analogue of ample divisors in algebraic geomery, see Definition~\ref{defn:positive}). Let $\mathbf n=(n_1,\dotsc,n_k)$ be as above and assume $n_1+\cdots+n_k=\dim \Gamma$. We can similarly define the \textit{multidegree} $\deg^{\mathbf n}_\CR(\Gamma)$ of $\Gamma$ of \textit{type} $\mathbf n$ and with respect to $\{\Lambda_i\}_i$ as the intersection number of $\Gamma$ with the pullback of $n_i$ copies of $\Lambda_i$ for each $i$.
Similarly as before, let $\pi_I\colon\CR\rightarrow \prod_{i\in I}\BR^{m_i}$ be the projection map. Then

\begin{thm}\label{thm:introduction} 
    We have 
$\deg^{\mathbf n}_\CR(\Gamma)>0$ if and only if $\dim\pi_I(\Gamma)\geq n_I$ for all $I\subset [k]$.
\end{thm}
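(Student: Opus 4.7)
The plan is to split the biconditional into two implications, using a projection-style argument for necessity and reducing sufficiency to the main positivity theorem for stable intersections advertised in the abstract.

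For necessity, assume $\deg^{\mathbf n}_\CR(\Gamma) > 0$ and fix $I \subset [k]$. By definition the multidegree is the degree of the zero-dimensional stable intersection of $\Gamma$ with $\pi_1^{-1}(\Lambda_1^{\cdot n_1}) \cdots \pi_k^{-1}(\Lambda_k^{\cdot n_k})$, where $\Lambda_i^{\cdot n_i}$ is the $n_i$-fold stable self-intersection of $\Lambda_i$. Consider the intermediate cycle $\Gamma_I := \Gamma \cdot \prod_{i \in I} \pi_i^{-1}(\Lambda_i^{\cdot n_i})$, which by associativity of stable intersection must be nonempty once the full intersection is nonempty. Its image under $\pi_I$ is contained in $\pi_I(\Gamma) \cap \prod_{i \in I} \Lambda_i^{\cdot n_i}$, viewed inside $\prod_{i \in I} \BR^{m_i}$, where the second factor has codimension $n_I$. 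Using that each $\Lambda_i$ is positive (so meets every tropical subvariety of sufficient dimension with positive weight), this forces $\dim \pi_I(\Gamma) \geq n_I$.

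For sufficiency, I would directly apply the main technical theorem of the paper, which asserts that for translation-admissible $\Gamma \subset \CR$ and tropical cycles $\Sigma_i \subset \BR^{m_i}$ with the appropriate positivity, the stable intersection $\Gamma \cdot \prod_i \pi_i^{-1}(\Sigma_i)$ has positive degree precisely when $\dim \pi_I(\Gamma) \geq \sum_{i \in I} \codim \Sigma_i$ for every $I \subset [k]$. Specializing $\Sigma_i := \Lambda_i^{\cdot n_i}$, whose positivity follows from that of $\Lambda_i$, turns the hypotheses $\dim \pi_I(\Gamma) \geq n_I$ into precisely the hypotheses of the main theorem, and Theorem~\ref{thm:introduction} follows.

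The main obstacle is, of course, the main theorem itself, which this plan treats as a black box. I expect it to be proved by induction on $\sum_i \codim \Sigma_i$: at each step one removes one divisor factor $\Sigma_i$ and finds a translation-admissible component inside $\Gamma \cdot \pi_i^{-1}(\Sigma_i)$ whose projection dimensions $\dim \pi_J(-)$ drop by exactly one for $J \ni i$ and are unchanged otherwise. The subtlety is that stable intersection may in principle shatter $\Gamma$ into components with shrunken projection dimensions in several factors simultaneously; translation-admissibility together with the positivity of the $\Lambda_i$ must conspire to rule this out and guarantee the survival of a good component, and verifying this is the technical heart of the paper.
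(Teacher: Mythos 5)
Your necessity argument has the right dimension-counting intuition, but the mechanism you cite is backwards: positivity of $\Lambda_i$ says it \emph{does} meet things of sufficient dimension, whereas what necessity requires is that a thing of insufficient dimension can be made to \emph{miss} $\prod_i\Lambda_i^{n_i}$. The paper argues the contrapositive directly: if $\dim\pi_I(\Gamma)<n_I$, then for a generic $\vec v\in\CR_I$ the translate $\pi_I(\Gamma)+\vec v$ is disjoint from $\prod_{i\in I}\Lambda_i^{n_i}$ (dimension versus codimension), and since translation preserves bounded rational equivalence and hence the degree of the stable intersection (Proposition~\ref{prop:basic property tropical intersection}), the product is zero. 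Positivity of the $\Lambda_i$ plays no role in this direction.

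The sufficiency direction as you have written it is circular: the ``main technical theorem'' you invoke as a black box (Theorem~\ref{thm:positivity tropical cycle}) is a verbatim restatement of Theorem~\ref{thm:introduction} itself, so there is nothing to specialize from. You do correctly intuit that a proof is needed, and you sketch an induction on $\sum_i\codim\Sigma_i$ that peels off divisor factors one at a time and tracks a good translation-admissible component of $\Gamma\cdot\pi_i^{-1}(\Sigma_i)$. This is a genuinely different plan from the paper's, and the worry you raise --- that the stable intersection might shatter $\Gamma$ into pieces whose projections lose dimension in several $I$'s at once, with no control from translation-admissibility since that property is not known to survive stable intersection --- is exactly why the paper does not proceed this way. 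Instead, the paper inducts on $\codim\Gamma$ in the \emph{opposite} direction: it first uses Lemma~\ref{lem:existence of face} to replace $\Gamma$ by the linear span $\Lin(P_\Gamma)$ of a single well-chosen facet and reduces each $\Lambda_i$ to a union of $m_i$ linear hyperplanes with trivial intersection; then, choosing $\vec u\in\BR^{m_k}$ carefully, it passes to $\Gamma_{\vec u}=\Gamma+\BR\vec u$, which has codimension one less, still satisfies all projection inequalities with $n_k$ bumped to $n_k+1$, and is automatically translation-admissible (this is precisely what translation-admissibility buys). The inductive hypothesis applied to $\Gamma_{\vec u}$ produces facets $P_i$ with $\Gamma+\BR\vec u+\sum_i\Lin(P_i)=\CR$, and a short linear-algebra argument over the $n_k+1$ hyperplanes cutting out $P_k$ then shows one can trade the $\BR\vec u$ for a facet of $\Lambda_k^{n_k}$. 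Adding a line rather than cutting by a divisor is the structural idea that makes the induction close, and it is absent from your proposal.
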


See Theorem~\ref{thm:positivity tropical cycle} for a more detailed statement. Plainly, the positivity of $\deg^{\mathbf n}_\CR(\Gamma)$ is independent of the choice of $\Lambda_i$. 
The main technique of our proof is the (bounded) rational equivalence relation between tropical varieties, developed by Allerman and Rau \cite{allermann2016rational}, which is compatible with the intersection product (also known as stable intersection) between tropical varieties. This enables us to replace the intersection number in $\deg^{\mathbf n}_\CR(\Gamma)$ with the intersection number of some simpler tropical varieties, such as unions of linear subspaces of $\CR$. Combining the property of being translation-admissible, the conclusion follows from an induction on the codimension of $\Gamma$, conducted by replacing $\Gamma$ with its Minkowski sum with a certain rational line in $\CR$.

As an application, utilizing the lifting property of tropical intersections developed by Osserman and Payne \cite{osserman2013lifting}, we recover Theorem~\ref{thm:castillo} for algebraically closed fields. Moreover, we state the result in such a way that the ambient space can be any product of smooth projective varieties. See Corollary~\ref{cor:positivity in the algebraic case}.
It should be mentioned that, via the tropical lifting property, one can potentially provide more specific estimations than the positivity of the multidegrees of subschemes of $\BP$ by studying the tropical side. An example would be the log-concavity of the multidegrees discussed in \cite{huh2012milnor} for $k=2$. 
On the other hand, as translation-admissible tropical varieties are not necessarily realizable as the tropicalization of algebraic varieties (Remark~\ref{rem:translation-admissible not algebraic}), it is not clear if one can recover Theorem~\ref{thm:introduction} from Theorem~\ref{thm:castillo}. 

Finally, we would like to discuss a couple of further directions. First of all, we would like to investigate the connection between translation-admissible tropical varieties and 
tropical varieties connected through codimension 1 (\cite[Definition 3.3.4]{maclagan2021introduction}). 
Note that in Example~\ref{ex:non translation-admissible} we found a tropical variety, which is connected through codimension 1 but not translation-admissible, that violates Theorem~\ref{thm:introduction}. Secondly, for a given irreducible subscheme $X$ of $\BP$, Castillo et al. proved that the dimension $\dim p_I(X)$ is a \textit{submodular} function on $2^{[k]}$, in other words, we have 
$$\dim p_{I\cap J}(X)+\dim p_{I\cup J}(X)\leq \dim p_I(X)+\dim p_J(X)$$
for any $I,J\subset [k]$. It then follows that the support $$\mathrm{MSupp}(X)=\{\mathbf n\in\BZ_{\geq 0}^k|\deg^{\mathbf n}_\BP(X)>0\}$$
 is a \textit{polymatroid} (\cite[\S~2.3]{castillo2020multidegrees}). We prove a similar result for translation-admissible tropical varieties in Proposition~\ref{prop:submodular}. It would be interesting to study the connection between the polymatroids coming from translation-admissible tropical varieties and the linear polymatroids, as well as the algebraic polymatroids. See \cite[Theorem C]{castillo2020multidegrees} for parallel results of $\mathrm{MSupp}(X)$.

 \subsection*{Roadmap.} In Section~\ref{sec:prelim} we introduce necessary backgrounds in tropical geometry, especially tropical intersection theory, and prove some technical lemmata. In Section~\ref{sec:proof} we prove our main results.

\subsection*{Acknowledgement}
I would like to thank Naizhen Zhang for helpful conversations. I would also like to thank the anonymous referee for many insightful comments that improved the presentation.
\subsection*{Notation and Conventions}
\begin{conv}
    Throughout this paper, 
    all algebraic varieties are assumed irreducible and defined over an algebraically closed field $K$ with a non-trivial valuation $\val\colon K\rightarrow \BR$ such that $\BQ\subset \val(K)$. Suppose such a field $K$ is fixed, we use $\BG_m$ to denote the multiplicative group of non-zero elements in $K$, i.e., the algebraic torus over $K$ with dimension 1.
\end{conv}

\begin{conv}
    Let $Y,Z$ be two subvarieties of a smooth algebraic variety $X$. We say that $Y$ and $Z$ \textit{intersect properly} if $Y\cap Z$ has codimension $\codim Y+\codim Z$ in $X$. 
    In this case, let $Y\cdot Z$ denote the refined intersection cycle. 
    In addition, if $\codim Y+\codim Z=\dim X$, we denote by $\deg (Y\cdot Z)$ the sum of the intersection multiplicities of $Y$ and $Z$ along all points of $Y\cap Z$. 
\end{conv}

\begin{conv}
    A real space $\BR^m$ is always assumed to be equipped with an underlying lattice $\BZ^m$. By a  linear line (resp. linear hyperplane) in $\BR^m$ we mean a line (resp. hyperplane) that is also a linear subspace of $\BR^m$ (i.e., containing the origin).
\end{conv} 

\begin{conv}
    The projective space $\BP^m$ will be regarded as the toric variety associated to the polytope in $(\BR^m)^*$ with vertices $0,\vec e_1,\dotsc,\vec e_m$, where $\{\vec e_i\}_i$ is dual to the stand basis of $\BZ^m$.
\end{conv}

\begin{conv}
    All polyhedral complexes $\Gamma$ in $\BR^m$ are assumed rational and pure-dimensional, unless otherwise specified. The maximal faces of $\Gamma$ are called \textit{facets}.   
\end{conv}

\begin{nota}
    For two subsets $A,B$ of $\BR^m$, we denote their Minkowski sum by $A+B$.
\end{nota}

\begin{nota}
    Let $P\subset \BR^m$ be a polyhedron. We denote by $\Lin(P)$ the linear subspace of $\BR^m$ parallel to $P$ and has dimension equal to $P$. Denote by $\rec(P)$ the \textit{recession cone} of $P$:
    $$\rec(P)=\{\vec v\in\BR^m|x+\BR_{\geq 0}\vec v\subset P \mathrm{\ for\ all\ }x\in P.\}$$
\end{nota}

\begin{nota}\label{nt:[k]}
    Let $k$ be a positive integer. Denote $[k]=\{1,\dotsc,k\}$.
\end{nota}

\section{Preliminaries}\label{sec:prelim}

\subsection{Tropical intersection theory} In this subsection, we briefly recall some basic terminology from tropical intersection theory. The main reference is the paper \cite{allermann2016rational} of Allermann, Hampe, and Rau, developed for the ambient space $\BR^m$. See also \cite{he2019generalization,meyer2011intersection} for related works on compactified $\BR^m$. The $\BR^m$ case is enough for our later applications. 

We start with the notion of tropical cycles. Let $\Gamma\subset \BR^m$ be a  rational, finite, and pure-dimensional polyhedral complex. Let $\omega$ be a \textit{weight} on $\Gamma$ which assigns an integer to each facet of $\Gamma$.

\begin{defn}(\cite[\S~2.1]{allermann2016rational})\label{defn:tropical cycle}
     We say that $\Gamma$ is a \textit{tropical polyhedral complex} if for each codimension 1 face $Q$ of $\Gamma$, the following \textit{balancing condition} at $Q$ is satisfied:
     \begin{equation}\label{eq:balancing}
         \sum_{Q\subset P}\omega(P)\vec v_{P/Q}=0.
     \end{equation}
     Here, $P$ runs over all facets of $\Gamma$ containing $Q$; and $\vec v_{P/Q}$ is the primitive integer generator of the ray obtained from projecting $P$ to $\BR^m/\Lin(Q)$. 

     A \textit{tropical cycle} is an equivalent class of tropical polyhedral complexes, defined up to refinement. The \textit{support} of a tropical cycle is the union of facets with non-zero weights.
\end{defn}

\begin{conv}
    In the sequel, for simplicity, we will only consider tropical cycles with non-negative weights. Following the conventions in \cite{allermann2010first}, we call such tropical cycles \textit{tropical varieties}. 
\end{conv}

To reduce notation, we will often denote a tropical variety also by $\Gamma$ and assume that a tropical polyhedral complex structure (with non-negative weights) is chosen. We will also use $\Gamma$ to represent its support, if there is no confusion. 
 A tropical variety of codimension 1 in $\BR^m$ will be called a \textit{tropical divisor}; similarly, a dimension-1 tropical variety will be called a \textit{tropical curve}. If $\dim\Gamma=0$, we call the sum of all weights of the points in $\Gamma$ the \textit{degree} of $\Gamma$ and denote by $\deg\Gamma$. In this case, we will simply write $\Gamma>0$ if $\deg\Gamma>0$, in other words, if $\Gamma$ is non-empty.

\begin{defn}(\cite[Definition 5.1]{allermann2016rational})
    Let $\Gamma$ be a tropical variety, and denote $$\rec(\Gamma)=\{\rec(P)|P\mathrm{\ is\ a\ face\ of\ }\Gamma\}.$$
    Then, up to a suitable refinement, $\rec(\Gamma)$ becomes a tropical variety (with induced weight from $\Gamma$) which is a fan. This is called the \textit{recession cycle} of $\Gamma$.
\end{defn}

Similar to algebro-geometric intersection theory, a \textit{bounded rational equivalence} relation between tropical varieties was developed in \cite[\S 3]{allermann2016rational}. 
It was denoted by ``$\simb$." 
We will not need the exact construction of bounded rational equivalence; however, the following proposition is essential:

\begin{prop}\label{prop:basic property tropical intersection}
    Let $\Gamma,\Gamma'\subset \BR^m$ be tropical varieties. Then we have:
    \begin{enumerate}
    \item $\mathrm{(}$\cite[Proposition 3.2 (v)]{allermann2016rational}$\mathrm{)}$ If $\Gamma\simb \Gamma'$ and $\dim \Gamma=\dim\Gamma'=0$, then $\deg \Gamma=\deg\Gamma'$.
        \item $\mathrm{(}$\cite[Proposition 3.3]{allermann2016rational}$\mathrm{)}$ For any $\vec v\in\BR^m$, the translation $\Gamma+\vec v$ is also a tropical variety and $\Gamma\simb \Gamma+\vec v$.
        \item $\mathrm{(}$\cite[Theorem 5.3]{allermann2016rational}$\mathrm{)}$ We have 
        $\Gamma\simb\rec(\Gamma)$.
    \end{enumerate}
\end{prop}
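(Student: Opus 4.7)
All three items are results quoted from \cite{allermann2016rational}, so any proof at this point amounts to importing the constructions in that paper. Still, since the relation $\simb$ has not been spelled out in the excerpt, let me sketch the underlying ideas. Recall that $\simb$ is generated by the elementary relation $\Gamma_1-\Gamma_2=\mathrm{div}(\varphi)\cdot Z$, where $Z$ is a tropical cycle of dimension one larger than the $\Gamma_i$ and $\varphi$ is a bounded piecewise-integer-affine function on the support of $Z$.

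For (1), I would reduce to showing that when $\dim Z=1$ and $\varphi$ is bounded on $Z$, we have $\deg(\mathrm{div}(\varphi)\cdot Z)=0$. The weight at each interior vertex is obtained by pairing the slopes of $\varphi$ with the primitive edge vectors in the balancing equation \eqref{eq:balancing} for $Z$. Since $\varphi$ is bounded, it is eventually constant along every unbounded ray of $Z$, so no contribution from the recession directions survives. The remaining interior contributions telescope along each bounded edge and cancel, yielding total degree $0$ — the tropical analogue of the fact that a principal divisor on a proper algebraic curve has degree zero. For (2), that $\Gamma+\vec v$ is a tropical variety is immediate from \eqref{eq:balancing}, since the balancing condition only involves face directions, which translation preserves. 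To produce the equivalence, I would form the cylinder $\Gamma\times [0,1]\subset\BR^m\times\BR$ with pulled-back weights and cut it with the divisor of a bounded piecewise-linear function on $\BR^m\times\BR$ whose slopes across $\{t=0\}$ and $\{t=1\}$ are chosen so that the resulting codimension-one cycle projects to $(\Gamma+\vec v)-\Gamma$.

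For (3), the approach is considerably less routine and is the main obstacle. Intuitively, one wants to deform $\Gamma$ to its recession fan by pushing the bounded part to infinity: the scaled cycles $\frac{1}{t}\Gamma$ approach $\rec(\Gamma)$ in the Hausdorff sense as $t\to\infty$, and the task is to package this asymptotic behaviour as a single bounded rational equivalence. Allermann and Rau achieve this by constructing an explicit cycle of dimension one greater, together with a bounded piecewise-linear function whose divisor realizes $\Gamma-\rec(\Gamma)$. Keeping the function bounded while still accurately capturing the asymptotic geometry of $\Gamma$ is the delicate step, and is what makes item (3) the technical heart of the proposition.
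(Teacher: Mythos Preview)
The paper gives no proof of this proposition: each item is stated with a pinpoint citation to \cite{allermann2016rational}, and the preceding text explicitly says ``We will not need the exact construction of bounded rational equivalence.'' Your opening sentence already identifies this correctly, and the sketches you add go beyond what the paper itself offers, so there is nothing substantive to compare against.

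One small remark on your sketch for (2): as written, $\Gamma\times[0,1]\subset\BR^m\times\BR$ is not a balanced tropical cycle (balancing fails at the boundaries $t=0$ and $t=1$), and even after repairing that you would still need compatibility of $\simb$ with pushforward along $\BR^{m+1}\to\BR^m$ to land back in $\BR^m$. A route that stays inside $\BR^m$ is to reduce, via the Cartier-divisor description of stable intersection, to the case of a single divisor $\mathrm{div}(\varphi)$, where the translate is $\mathrm{div}(\varphi(\cdot-\vec v))$ and the difference $\varphi-\varphi(\cdot-\vec v)$ is bounded. This is a minor wrinkle; your overall reading of the proposition as a package of imported results is accurate, and your sketches for (1) and (3) are faithful to the standard arguments.
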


For two tropical varieties $\Gamma$ and $\Gamma'$ in $\BR^m$, one can perform the \textit{stable intersection} $\Gamma\cdot\Gamma'$, which is an analogue of the intersection product in algebraic geometry. It is a tropical variety with codimension equal to $\codim\Gamma+\codim\Gamma'$, and supported on the union of intersections of faces $P$ and $P'$ of $\Gamma$ and $\Gamma'$, respectively, such that $\dim( P+P')=m$. We refer the readers to \cite[\S~3.6]{maclagan2021introduction} for a concrete construction of the stable intersection (including the weights of its facets). 
The following proposition says that stable intersection is compatible with bounded rational equivalence.

\begin{prop}$\mathrm{(}$\cite[Proposition 3.2 (iv)]{allermann2016rational}$\mathrm{)}$
    Let $\Gamma,\Gamma',\Gamma''$ be tropical varieties in $\BR^m$. If $\Gamma'\simb \Gamma''$, then $\Gamma\cdot\Gamma'\simb \Gamma\cdot\Gamma''$. 
\end{prop}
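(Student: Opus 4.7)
The plan is to reduce to elementary generators of the bounded rational equivalence relation and then exploit the functorial properties of stable intersection. In the Allermann--Hampe--Rau framework, $\simb$ is generated by relations of the form
\[\Gamma' - \Gamma'' \;=\; \pi_*\bigl(\mathrm{div}(\varphi)\cdot Z\bigr),\]
where $Z$ is a tropical variety in $\BR^m\times\BR$, $\varphi$ is a piecewise linear function on $Z$ whose divisor has bounded support in the $\BR$-direction, and $\pi\colon\BR^m\times\BR\to\BR^m$ is the projection onto the first factor. Because $\simb$ and stable intersection are both additive in each argument, it suffices to verify the conclusion for one such generator.

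For such a generator, the next step is to lift $\Gamma$ to $\widetilde\Gamma := \Gamma\times\BR \subset \BR^m\times\BR$ and invoke two structural properties of stable intersection: (a) the projection formula $\Gamma\cdot\pi_*(W) = \pi_*(\widetilde\Gamma\cdot W)$, valid whenever the intersection on the right has bounded fibers over $\BR^m$; and (b) associativity with the divisor operator, $\widetilde\Gamma\cdot\bigl(\mathrm{div}(\varphi)\cdot Z\bigr) = \mathrm{div}\bigl(\varphi|_{\widetilde\Gamma\cdot Z}\bigr)\cdot(\widetilde\Gamma\cdot Z)$. Combining these would yield
\[\Gamma\cdot\Gamma' - \Gamma\cdot\Gamma'' \;=\; \pi_*\Bigl(\mathrm{div}\bigl(\varphi|_{\widetilde\Gamma\cdot Z}\bigr)\cdot(\widetilde\Gamma\cdot Z)\Bigr),\]
which, granted that the boundedness condition still holds for the witness on the right, exhibits $\Gamma\cdot\Gamma' \simb \Gamma\cdot\Gamma''$ as an elementary equivalence of the same type.

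The step I expect to be the main obstacle is verifying that the new witness $(\widetilde\Gamma\cdot Z,\ \varphi|_{\widetilde\Gamma\cdot Z})$ really satisfies the boundedness requirement, and doing the multiplicity bookkeeping underlying (a) and (b) correctly. Set-theoretically one has $\mathrm{supp}\,\mathrm{div}(\varphi|_{\widetilde\Gamma\cdot Z}) \subset \widetilde\Gamma\cdot\mathrm{supp}\,\mathrm{div}(\varphi)$, and the right-hand side is bounded in the $\BR$-direction by hypothesis; the subtlety lies in confirming this containment with multiplicities, and in checking that the restriction of $\varphi$ remains a legitimate piecewise linear function on $\widetilde\Gamma\cdot Z$ with the correct balancing at each codimension-one face. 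The cleanest route is probably to use the generic-translation description of stable intersection from \cite[\S~3.6]{maclagan2021introduction}: perturbing $Z$ generically relative to $\widetilde\Gamma$ reduces the computation to a transverse intersection of shifted polyhedra, where associativity and the projection formula become essentially tautological, and the divisor-restriction step is transparent.
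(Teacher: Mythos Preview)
The paper does not give its own proof of this proposition: it is quoted verbatim from \cite[Proposition 3.2 (iv)]{allermann2016rational} and used as a black box, with no argument supplied. So there is nothing on the paper's side to compare your attempt against.

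For what it is worth, your outline is in the right spirit and tracks the architecture of the proof in the cited reference: reduce to a single generator of $\simb$, lift along the projection $\BR^m\times\BR\to\BR^m$, and combine a projection formula with compatibility of stable intersection and Cartier divisors. One point of caution: in the Allermann--Rau formalism the product $\Gamma\cdot(-)$ is itself \emph{defined} as a composition of Cartier-divisor operations via the diagonal embedding, so the identities you label (a) and (b) are established there in that language rather than through the generic-translation description of \cite[\S~3.6]{maclagan2021introduction}. If you go the generic-translation route you are implicitly invoking the equivalence of the two definitions of tropical intersection, which is true but is an extra ingredient; staying inside the Cartier-divisor formalism throughout makes the boundedness check and the multiplicity bookkeeping more transparent. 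Either way, what you have written is a sketch rather than a proof, as you yourself note.
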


We end this subsection with the notion of the transverse intersection of tropical varieties.

\begin{defn}(\cite[Definition 3.4.9]{maclagan2021introduction}])
    Let $\Gamma,\Gamma'$ be tropical varieties in $\BR^m$ and $x\in \Gamma\cap\Gamma'$. We say that $\Gamma$ and $\Gamma'$ \textit{intersect transversely} at $x$ if there are polyhedral complex structures on $\Gamma$ and $\Gamma'$ such that, if $P$ (resp. $P'$) is the unique open face of $\Gamma$ (resp. $\Gamma'$) containing $x$, then $\Lin(P)+\Lin(P')=\BR^m$. We say that $\Gamma$ and $\Gamma'$ \textit{intersect transversely} if it is so at all points of $\Gamma\cap \Gamma'$.
\end{defn}

Clearly, if $\Gamma$ intersects $\Gamma'$ transversely, then $\Gamma\cdot\Gamma'$ is supported on $\Gamma\cap \Gamma'$.

\subsection{Positive and translation-admissible tropical varieties}
In this subsection, we introduce the main properties of a tropical variety that we want to study.
\begin{defn}\label{defn:positive}
    A tropical divisor $\Gamma\subset\BR^m$ is \textit{positive} if for any non-empty tropical curve $\Gamma'$ we have $\Gamma\cdot\Gamma'>0$. 
\end{defn}

\begin{defn}\label{defn:translation-admissible}
    A tropical variety $\Gamma\subset\BR^m$ is \textit{translation-admissible} if for any rational linear subspace $V$ of $\BR^m$, the Minkowski sum $\Gamma+V$ is supported on a tropical variety.
\end{defn}

Note that the Minkowski sum $\Gamma+V$ is the image of $\Gamma\times V$ under the linear projection given by $(x,y)\mapsto x+y$, and  $\Gamma\times V$ is naturally a tropical variety. According to \cite[Lemma 2.2]{jensen2016stable}, $\Gamma+V$ is supported on a tropical variety if and only if it has pure dimension. Hence $\Gamma$ is translation-admissible if and only if $\Gamma+V$ is pure dimensional for any rational linear subspace $V$ of $\BR^m$.
Clearly, any linear subspace of $\BR^m$ is translation-admissible. The tropicalization of any subvariety of an algebraic torus is also translation-admissible (Lemma~\ref{lem:tropicalization translation-admissible}). On the other hand, translation-admissible tropical varieties do not have to be connected. For example, the union of two parallel lines is translation-admissible.

The property of being translation-admissible enables us to do induction on the codimension of a tropical variety in the proof of our main result. See Theorem~\ref{thm:positivity tropical cycle} for details. We next prove some technical lemmata for future use.
\begin{lem}\label{lem:positive stable intersection}
    Let $\Gamma,\Gamma'$ be tropical varieties in $\BR^m$ with complementary codimensions. Then the following are equivalent:
    \begin{enumerate}
        \item $\Gamma\cdot\Gamma'>0$;
         \item there is a facet $P'$ of $\Gamma'$ such that $\Gamma\cdot\Lin (P')>0$;
        \item there are facets $P$ and $P'$ of $\Gamma$ and $\Gamma'$ respectively, such  that $\Lin (P)+\Lin (P')=\BR^m$.
    \end{enumerate}
\end{lem}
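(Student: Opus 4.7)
The plan is to prove $(1)\Leftrightarrow(3)$ first and then deduce $(2)\Leftrightarrow(3)$ by specializing the argument to the pair $(\Gamma,\Lin(P'))$. The engine throughout is the translation invariance $\Gamma'\simb\Gamma'+\vec v$ from Proposition~\ref{prop:basic property tropical intersection}(2), its compatibility with stable intersection \cite[Proposition 3.2(iv)]{allermann2016rational}, and the fact that $\simb$ preserves the degree of $0$-cycles (Proposition~\ref{prop:basic property tropical intersection}(1)). Together these let me replace $\Gamma'$ by a generic translate $\Gamma'+\vec v$ without changing $\deg(\Gamma\cdot\Gamma')$, reducing every claim to a count of points in a transverse intersection, where all contributions are automatically positive.

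For $(1)\Rightarrow(3)$, I would invoke the explicit description of the stable intersection via generic perturbation in \cite[\S~3.6]{maclagan2021introduction}: for small generic $\vec v$, the $0$-cycle $\Gamma\cdot(\Gamma'+\vec v)$ is supported on transverse intersection points $x\in P\cap(P'+\vec v)$, where $P,P'$ range over facets of $\Gamma,\Gamma'$ with $\Lin(P)+\Lin(P')=\BR^m$, and the local weight at $x$ is a positive integer multiple of $\omega(P)\omega(P')$. If the positive quantity $\deg(\Gamma\cdot\Gamma')=\deg(\Gamma\cdot(\Gamma'+\vec v))$ is nonzero, at least one such pair of facets must actually contribute a non-empty intersection, and since $\Lin(P'+\vec v)=\Lin(P')$, this produces $(3)$.

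For $(3)\Rightarrow(1)$, given facets $P,P'$ with $\Lin(P)+\Lin(P')=\BR^m$, I note that $\dim P+\dim P'=m$ and the linear spans of $P,P'$ complement, so the Minkowski difference $P-P'$ has non-empty interior in $\BR^m$. I would pick $\vec v$ generically in $\mathrm{int}(P-P')$ to simultaneously ensure (i) that $\Gamma$ and $\Gamma'+\vec v$ meet transversely (an open-dense condition on $\vec v$) and (ii) that $\mathrm{relint}(P)\cap\mathrm{relint}(P'+\vec v)$ is non-empty. The transverse intersection at such a point carries a strictly positive weight in $\Gamma\cdot(\Gamma'+\vec v)$, so $\deg(\Gamma\cdot(\Gamma'+\vec v))>0$, and translation invariance yields $\deg(\Gamma\cdot\Gamma')>0$.

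Finally, $(2)\Leftrightarrow(3)$ follows from the already-proven $(1)\Leftrightarrow(3)$ applied to $(\Gamma,\Lin(P'))$, whose codimensions sum to $m$: the linear subspace $\Lin(P')$ is a tropical variety with a single facet (itself) of weight $1$, so $\Gamma\cdot\Lin(P')>0$ is equivalent to the existence of a facet $P$ of $\Gamma$ with $\Lin(P)+\Lin(P')=\BR^m$, and varying $P'$ gives the equivalence. The main (mild) obstacle is coordinating the two genericity conditions on $\vec v$ in $(3)\Rightarrow(1)$; but $\mathrm{int}(P-P')$ is non-empty open while the transversality locus for $\Gamma$ and $\Gamma'+\vec v$ is open dense in $\BR^m$, so their intersection is non-empty and any $\vec v$ in it works.
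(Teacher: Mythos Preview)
Your argument is correct and is essentially the same as the paper's: both reduce to a generic translate so that the stable intersection becomes a transverse set-theoretic intersection with strictly positive local multiplicities, and then read off the spanning condition on $\Lin(P)+\Lin(P')$. The only cosmetic difference is that the paper runs the cycle $(1)\Rightarrow(3)\Rightarrow(2)\Rightarrow(1)$ directly, whereas you prove $(1)\Leftrightarrow(3)$ and then specialize to $\Lin(P')$ for $(2)\Leftrightarrow(3)$; the underlying mechanism is identical.
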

\begin{proof}
    Up to translation, we may assume that $\Gamma$ intersects  $\Gamma'$ transversely. In this case, the stable intersection $\Gamma\cdot \Gamma'$ is supported on $\Gamma\cap\Gamma'$. Hence $\Gamma\cdot\Gamma'>0$ if and only if $\Gamma\cap\Gamma'$ is nonempty. This shows (1)$\Rightarrow$(3).
For (3)$\Rightarrow$(2), we again translate $\Gamma$ such that $P\cap P'$ is nonempty and transverse. Then  $\Gamma\cdot\Lin(P')$ contains $P\cap P'$. Hence $\Gamma\cdot\Lin (P')>0$.
    For (2)$\Rightarrow$(1), we translate $\Gamma$ so that it intersects $P'$ transversely at some point. Hence $\Gamma\cdot\Gamma'>0$.
\end{proof}
\begin{rem}\label{rem:positive divisors}
    According to Lemma~\ref{lem:positive stable intersection}, a tropical divisor $\Gamma\subset \BR^m$ is positive if and only if for every linear line $L$ in $\BR^m$, there is a facet $P$ of $\Gamma$ such that $L$ is not contained in $\Lin(P)$. In particular, if $\Gamma\subset\BR^m$ be a union of linear hyperplanes, then $\Gamma$ is positive if and only if there are hyperplanes $\{H_i\}_{i\in [m]}$ in $\Gamma$ such that $\cap_{i\in[m]}H_i=\{0\}$.
\end{rem}

\begin{lem}\label{lem:projection of translation-admissible tropical cycle}
    Suppose $\Gamma\subset\BR^m$ is a translation-admissible tropical variety. Suppose we have a decomposition $\BR^m=\BR^{m_1}\times\BR^{m_2}$. Then the image of $\Gamma$ in $\BR^{m_i}$ under projection is a translation-admissible tropical variety in $\BR^{m_i}$.
\end{lem}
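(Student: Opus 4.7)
The plan is to take $\pi\colon\BR^m\to \BR^{m_i}$ to be the projection (say $i=1$ by symmetry, and write $V_0 := \{0\}\times\BR^{m_2}$). The main observation I would exploit is the set-theoretic identity
\[
\Gamma+V_0 \;=\; \pi(\Gamma)\times \BR^{m_2},
\]
and more generally $\Gamma+(V_1\times\BR^{m_2}) = (\pi(\Gamma)+V_1)\times\BR^{m_2}$ for any $V_1\subset\BR^{m_1}$.

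\textbf{Step 1 (tropical structure).} First I would show that $\pi(\Gamma)$ is a tropical variety. Apply the translation-admissibility of $\Gamma$ to the rational linear subspace $V_0$: this gives that $\Gamma+V_0$ is supported on a tropical variety $\Sigma\subset\BR^m$, so in particular it is pure-dimensional, say of dimension $d$. Since $|\Sigma|=\pi(\Gamma)\times\BR^{m_2}$ is a product, $\pi(\Gamma)$ is closed (a finite union of the polyhedra $\pi(P)$) and pure-dimensional of dimension $d-m_2$. Choose any polyhedral complex structure on $\pi(\Gamma)$, and equip $\Sigma$ with the product structure whose faces are $P'\times\BR^{m_2}$; this is a valid polyhedral structure refining any given structure on $\Sigma$ (possibly after further common refinement). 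Transfer weights by setting $\omega_{\pi(\Gamma)}(P'):=\omega_{\Sigma}(P'\times\BR^{m_2})$.

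\textbf{Step 2 (balancing).} Via the canonical identification
\[
\BR^m/\Lin(Q'\times\BR^{m_2}) \;\cong\; \BR^{m_1}/\Lin(Q'),
\]
each primitive normal vector $\vec v_{(P'\times\BR^{m_2})/(Q'\times\BR^{m_2})}$ corresponds to $\vec v_{P'/Q'}$. Hence the balancing identity of $\Sigma$ at the codimension-one face $Q'\times\BR^{m_2}$ reads, after this identification, exactly as the balancing identity of $\pi(\Gamma)$ at $Q'$. Every codimension-one face of $\pi(\Gamma)$ arises this way, so $\pi(\Gamma)$ satisfies the balancing condition of Definition~\ref{defn:tropical cycle} with non-negative weights, i.e.\ is a tropical variety.

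\textbf{Step 3 (admissibility).} Now let $V_1\subset\BR^{m_1}$ be any rational linear subspace. Then $V_1\times\BR^{m_2}$ is a rational linear subspace of $\BR^m$, and translation-admissibility of $\Gamma$ gives that $\Gamma+(V_1\times\BR^{m_2})=(\pi(\Gamma)+V_1)\times\BR^{m_2}$ is pure-dimensional, which forces $\pi(\Gamma)+V_1\subset\BR^{m_1}$ to be pure-dimensional as well. By \cite[Lemma 2.2]{jensen2016stable} this means $\pi(\Gamma)+V_1$ is supported on a tropical variety, proving $\pi(\Gamma)$ is translation-admissible.

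The only genuinely delicate point is Step 2: recognising that the balancing condition for a product tropical variety $Y\times\BR^{m_2}$ (with a product polyhedral structure) descends to balancing of $Y$. This is essentially a locality statement for the balancing condition with respect to taking the quotient by a rational linear subspace, and once the polyhedral structures are set up in product form it reduces to an identification of primitive normal vectors; the remaining steps are a direct manipulation of Minkowski sums combined with one invocation of Jensen's lemma.
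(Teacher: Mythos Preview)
Your proposal is correct and follows essentially the same route as the paper: both hinge on the identity $\Gamma+\BR^{m_2}=\pi(\Gamma)\times\BR^{m_2}$ and then read off the tropical-variety and translation-admissibility properties of $\pi(\Gamma)$ from those of the product. The paper compresses your Steps~1--2 into a single sentence; note that your explicit balancing check in Step~2 can in fact be bypassed by the same invocation of \cite[Lemma 2.2]{jensen2016stable} you use in Step~3 (take $V_1=\{0\}$), which is also how the paper implicitly handles it.
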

\begin{proof}
    By symmetry, we only prove the conclusion for $\BR^{m_1}$. Let $\pi$ be the projection from $\BR^m$ to $\BR^{m_1}$. Let $\Gamma'=\Gamma+\BR^{m_2}$. Then $\Gamma'$ is also a translation-admissible tropical variety, and $\pi(\Gamma)=\pi(\Gamma')$. Since $\Gamma'=\pi(\Gamma')\times \BR^{m_2}$ and $\Gamma'$ is translation-admissible, $\pi(\Gamma')$ is a translation-admissible tropical variety in $\BR^{m_1}$. This proves the lemma.
\end{proof}

\begin{lem}\label{lem:facet translation invariant}
    Let $\Gamma$ be a tropical variety in $\BR^m$ and $\vec v\in \BQ^m$ a vector. Then $\Gamma+\BR\vec v=\Gamma$ if and only if  $\vec v\in\Lin (P)$ for any facet $P$ of  $\Gamma$.  In particular, if $\Gamma\subsetneq \Gamma+\BR\vec v$ then $\dim (\Gamma+\BR\vec v)=\dim \Gamma+1$; if furthermore that $\Gamma$ is translation-admissible, then $\Gamma+\BR\vec v$ has pure dimension $\dim\Gamma+1$.
\end{lem}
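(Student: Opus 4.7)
My plan is to establish the iff by a walking argument along the line in direction $\vec v$, using the balancing condition to push the walk across codimension-$1$ boundaries, and then to deduce the dimension claims from the iff.

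The forward direction is a local argument: picking $x$ in the relative interior of a facet $P$, a small neighborhood of $x$ in $\Gamma$ coincides with that of $x$ in $P$ (since $P$ is top-dimensional and no other face of $\Gamma$ meets $x$). The hypothesis $x+\BR\vec v\subset \Gamma$ thus forces $x+t\vec v\in P$ for small $|t|$, giving $\vec v\in \Lin(P)$.

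For the converse, fix $x\in \Gamma$ and set $I=\{t\in\BR: x+t\vec v\in \Gamma\}$. Since $I$ is closed and contains $0$, the connectedness of $\BR$ reduces the task to proving that $I$ is open. Given $t_0\in I$, put $y=x+t_0\vec v$ and let $F$ be the unique face of $\Gamma$ whose relative interior contains $y$. When $\vec v\in \Lin(F)$, the same local-neighborhood argument as above settles it. The essential case, and the main obstacle, is $\vec v\notin \Lin(F)$: here I pass to the star $\Sigma_F:=\mathrm{Star}_F(\Gamma)$ in $\BR^m/\Lin(F)$, a pure-dimensional balanced fan whose facets are the images of the facets $P_i$ of $\Gamma$ containing $F$. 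The hypothesis translates into: the nonzero image $\bar{\vec v}$ of $\vec v$ lies in the linear span of every facet of $\Sigma_F$. I then prove $\pm\bar{\vec v}\in|\Sigma_F|$ by induction on $\dim\Sigma_F$. The base case $\dim\Sigma_F=1$ holds because all facets are then rays inside the line $\BR\bar{\vec v}$, and balancing~\eqref{eq:balancing} forces both orientations to carry positive weight; the inductive step slices $\Sigma_F$ by a generic rational hyperplane containing $\bar{\vec v}$ to obtain a balanced fan of dimension $\dim\Sigma_F-1$ satisfying the same hypothesis. The facet so produced in each of the two directions lifts to a facet of $\Gamma$ into which $y+t\vec v$ enters for $t$ slightly greater (resp.\ less) than $t_0$, which proves openness.

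Finally, for the ``in particular'' clause: if $\Gamma\subsetneq \Gamma+\BR\vec v$, the iff already proven supplies a facet $P$ with $\vec v\notin \Lin(P)$, so that $P+\BR\vec v$ is a polyhedron of dimension $\dim P+1=\dim\Gamma+1$ contained in $\Gamma+\BR\vec v$. Since no summand in $\Gamma+\BR\vec v=\bigcup_{P'}(P'+\BR\vec v)$ can exceed this dimension, one gets $\dim(\Gamma+\BR\vec v)=\dim\Gamma+1$. Under translation-admissibility of $\Gamma$ applied to the rational linear subspace $V=\BR\vec v$, the discussion immediately following Definition~\ref{defn:translation-admissible} directly yields pure-dimensionality of $\Gamma+\BR\vec v$.
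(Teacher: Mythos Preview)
Your argument is correct. Both the forward direction and the ``in particular'' clause match the paper's reasoning essentially verbatim.

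For the converse, however, you take a genuinely different route. The paper first throws away a set of dimension $\dim\Gamma-1$: letting $\Lambda$ be the union of the codimension-$1$ faces $Q$ with $\vec v\in\Lin(Q)$, it argues that it suffices to show $x+\BR\vec v\subset\Gamma$ for $x\in\Gamma\setminus(\Lambda+\BR\vec v)$. For such $x$, any boundary point $y$ of $(x+\BR\vec v)\cap\Gamma$ necessarily lies in a codimension-$1$ face $Q$ with $\vec v\notin\Lin(Q)$, and the one-dimensional balancing condition at $Q$ immediately gives the contradiction. In other words, the paper uses a density trick to reduce to the case where the relevant star is one-dimensional, so that only the base case of your induction is ever needed.

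By contrast, you treat every boundary point $y$ on its own terms, pass to $\mathrm{Star}_F(\Gamma)$ for the face $F$ containing $y$ in its relative interior, and run an induction on $\dim\Sigma_F$ via generic hyperplane slices through $\bar{\vec v}$. This is more work, but it has the virtue of proving the slightly stronger local statement that $\pm\bar{\vec v}\in|\mathrm{Star}_F(\Gamma)|$ at \emph{every} face $F$, not just at generic codimension-$1$ faces, and it avoids the (mild) genericity bookkeeping the paper's argument implicitly requires to ensure $y$ lands in the relative interior of a codimension-$1$ face.
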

\begin{proof}
    If $\Gamma+\BR\vec v=\Gamma$, then for every facet $P$ of $\Gamma$ we have 
    $\dim(P+\BR\vec v)\leq \dim (\Gamma+\BR\vec v)=\dim \Gamma=\dim P$. Hence $\vec v\in\Lin(P)$.

    Now suppose $\vec v\in\Lin (P)$ for any facet $P$ of $\Gamma$. We want to show that $\Gamma= \Gamma+\BR\vec v$. Let $\Lambda$ be the union of all codimension-1 faces $Q$ of $\Gamma$ such that $\vec v\in\Lin (Q)$. Then $\dim(\Lambda+\BR\vec v)=\dim\Lambda=  \dim\Gamma-1$. It suffices to show that for each $x\in\Gamma\backslash(\Lambda+\BR\vec v)$ we have $x+\BR \vec v\subset \Gamma$. 

    Suppose there is an $x\in\Gamma\backslash(\Lambda+\BR\vec v)$ such that $x+\BR \vec v\not\subset \Gamma$. 
    Since $\Gamma\cap (x+\BR \vec v)$ is a union of closed intervals, there is a $y\in (x+\BR \vec v)\cap \Gamma$ and a positive real number $\epsilon$ such that  $y+a\vec v\not\in\Gamma$ either for any $0<a<\epsilon$ or  for any $-\epsilon<a<0$. Suppose the latter situation happens. By the assumption on $x$, the point $y$ is contained in a codimension 1 face $Q$ of $\Gamma$ such that $\vec v\not\in\Lin(Q)$. 
    Then for each facet $P$ of $\Gamma$ containing $Q$, the vector $\vec v_{P/Q}$ is represented by a positive multiple of $\vec v$. This contradicts with the balancing condition~(\ref{eq:balancing}) of $\Gamma$ at $Q$.
\end{proof}

\subsection{The tropicalization map}
Let $T=(K^*)^m$ be the algebraic torus of dimension $m$. The tropicalization map $\trop\colon T\rightarrow \BR^m$ is defined as 
$$x=(x_1,\dotsc,x_m)\mapsto \trop(x)=(\val x_1,\dotsc,\val x_m).$$
For a (irreducible) subvariety $X$ of $T$, we denote by $\trop(X)$ the closure of the set $\{\trop(x)|x\in X\}$. If $X$ is closed in $T$, then $\trop(X)$ admits a tropical variety structure inherited from the scheme structure of $X$ and $\dim(\trop(X))=\dim X$. See \cite[\S~3]{maclagan2021introduction} for details. If $X$ is not necessarily closed, by construction, $\trop(X)$ and $\trop(\overline X)$ have the same support. 

\begin{lem}\label{lem:tropicalization translation-admissible}
    Let $X$ be a closed subvariety of $T$. Then $\trop(X)$ is a translation-admissible tropical variety.
\end{lem}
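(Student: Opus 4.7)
The plan is to realize $\trop(X)+V$ as the tropicalization of an irreducible closed subvariety of $T$; since tropicalizations of irreducible closed subvarieties of $T$ are pure-dimensional tropical varieties by the structure theorem, the conclusion will then follow from the criterion mentioned after Definition~\ref{defn:translation-admissible} that translation-admissibility is equivalent to pure-dimensionality of every $\trop(X)+V$.

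First I would realize $V$ as the tropicalization of an algebraic subtorus of $T$. Choosing an integer basis $v_1,\dotsc,v_d\in\BZ^m$ of $V$, the homomorphism of tori $\phi\colon\BG_m^d\to T$ sending $(t_1,\dotsc,t_d)$ to the tuple with $i$-th coordinate $\prod_{j=1}^d t_j^{(v_j)_i}$ has closed image $T_V$ a subtorus of $T$; and since $\BQ\subseteq\val(K)$ is dense in $\BR$, one checks directly that $\trop(T_V)=V$. Next, let $Y:=\overline{X\cdot T_V}\subseteq T$ denote the Zariski closure of the image of the multiplication morphism $X\times T_V\to T$. As the closure of the image of the irreducible variety $X\times T_V$ under a morphism, $Y$ is an irreducible closed subvariety of $T$.

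Because $\val$ is a group homomorphism, $\trop(x\cdot t)=\trop(x)+\trop(t)$ for all $x\in X$ and $t\in T_V$. Combining this with the closedness of the polyhedral set $\trop(X)+V$ and with the observation in the paragraph preceding this lemma that $\trop(X\cdot T_V)$ and $\trop(Y)$ share the same support, one obtains the set-theoretic identity $\trop(Y)=\trop(X)+V$. Since $Y$ is an irreducible closed subvariety of $T$, $\trop(Y)$ is a pure-dimensional tropical variety; hence so is $\trop(X)+V$, proving that $\trop(X)$ is translation-admissible. I expect the only step requiring care to be the set-theoretic equality $\trop(Y)=\trop(X)+V$, which reduces to an elementary closure argument based on the continuity of $\trop$ and the density of $\val(K^*)$ in $\BR$; no serious obstacle is anticipated.
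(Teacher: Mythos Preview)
Your proposal is correct and follows essentially the same approach as the paper: both realize $\trop(X)+V$ as the tropicalization of the closure of $X$ times a subtorus of $T$, then appeal to the structure theorem. The only cosmetic difference is that the paper adds one integral vector at a time and inducts, whereas you pass directly to the full subtorus $T_V$; the key idea and the closure/Chevalley argument are the same.
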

\begin{proof}
   Let $\vec v=(a_1,\dotsc,a_m)\subset \BZ^m$ be an integral vector. We claim that $\trop(X)+\BR\vec v$ is also the tropicalization of a closed subvariety of $T$, hence a tropical variety. Since any rational linear subspace of $\BR^m$ is generated by finitely many integral vectors, by induction, we are done.
   
   Indeed, let $\mathbb G_m\rightarrow T$ be the cocharacter of $T$ given by $t\mapsto (t^{a_1},\dotsc,t^{a_m})$.
   By Chevalley's Theorem, $X':=\BG_m\cdot X$ is a constructible and irreducible subset of $T$. Hence $\overline X'$ is a closed   subvariety of $T$ and
   $\trop(\overline X')=\trop(X')=\trop(X)+\BR\vec v$. This proves the claim. 
\end{proof}

\begin{rem}\label{rem:translation-admissible not algebraic}
    According to \cite[Theorem 3.5.1]{maclagan2021introduction}, $\trop(X)$ is connected through codimension 1. However, not all translation-admissible tropical varieties (even the ones connected through codimension 1) come from subvarieties of $T$. For example, all connected tropical curves in $\BR^2$ are translation-admissible and connected through codimension 1; however, only the ``well-spaced" ones are realizable. See \cite{katz2012lifting,speyer2014parametrize}. 
\end{rem}

Whether the tropicalization map commutes with the intersection of two subvarities $X,X'$ of $T$ is an important question in tropical geometry. A positive answer was provided by Bogart et al. \cite{bogart2007computing} when $\trop(X)$ and $\trop(X')$ intersect transversely. Later, Osserman and Payne \cite{osserman2013lifting} generalized the result to the proper-intersection case, meaning that, as in the algebro-geometric case, the intersection $\trop(X)\cap\trop(X')$ has codimension as expected everywhere. Plainly,  transverse intersection implies proper intersection. Moreover, they also proved a lifting theorem for the intersection multiplicities. We recall part of their results for our later use.

\begin{thm}$\mathrm{(}$\cite[Corollary 5.1.2]{osserman2013lifting}$\mathrm{)}$\label{thm:lifting tropical intersections}
    Let $X,X'\subset T$ be closed subvarieties. Suppose $\trop(X)$ and $\trop(X')$ meet properly. Then $X$ intersects $X'$ properly and $$\trop(X\cdot X')=\trop(X)\cdot \trop(X)$$ as tropical varieties. In particular, if $\codim X+\codim X'=m$, then $$\deg (X\cdot X')=\deg( \trop(X)\cdot \trop(X')).$$
\end{thm}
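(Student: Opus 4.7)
The plan is to prove the lifting theorem via initial degenerations, localizing at each point $w \in \trop(X) \cap \trop(X')$ and transferring the problem to an intersection computation in a torus over the residue field $k$ of the valuation ring. The key input is the fundamental theorem of tropical geometry: for any closed subvariety $Y \subset T$, one has $w \in \trop(Y)$ iff the initial degeneration $\mathrm{in}_w(Y)$ is a non-empty subscheme of $T_k$; moreover, $\trop(\mathrm{in}_w(Y))$ equals the star $\mathrm{Star}_w(\trop(Y))$, i.e.\ the cone of directions at $w$, viewed as a fan at the origin of $\BR^m$.

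First I would reduce to a local statement at each $w\in\trop(X)\cap\trop(X')$. Since $\trop(X)$ and $\trop(X')$ meet properly at $w$, the stars $\mathrm{Star}_w(\trop(X))$ and $\mathrm{Star}_w(\trop(X'))$ are fans meeting properly at the origin. By the fundamental theorem this translates to $\trop(\mathrm{in}_w(X))$ and $\trop(\mathrm{in}_w(X'))$ meeting only at $0$ when $\codim X + \codim X' = m$, and more generally meeting in the expected dimension. Applying the Bogart--Jensen--Speyer--Sturmfels--Thomas transverse-intersection result (or a direct argument over $k$) to the degenerations shows $\mathrm{in}_w(X) \cap \mathrm{in}_w(X')$ has the expected dimension, which by upper-semicontinuity of fiber dimension forces $X \cap X'$ to meet properly near the points tropicalizing to $w$.

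Next I would establish the multiplicity equality. The crucial identity is that, for a proper intersection, the initial ideal commutes with intersection: $\mathrm{in}_w(I_X + I_{X'}) = \mathrm{in}_w(I_X) + \mathrm{in}_w(I_{X'})$ (this is where the properness hypothesis is used, via flatness of the Gröbner degeneration and the Cohen--Macaulay/Serre intersection multiplicity criterion). From this, Serre's $\mathrm{Tor}$-formula gives that the sum of intersection multiplicities of $X \cdot X'$ at points reducing to a given component of $\mathrm{in}_w(X) \cap \mathrm{in}_w(X')$ agrees with the length contribution of that component. Combined with the Maclagan--Sturmfels formula expressing the stable-intersection weight at $w$ as a sum over maximal cones $\sigma, \sigma'$ of the stars with $\sigma + \sigma' = \BR^m$, weighted by $[\BZ^m : \BZ\sigma + \BZ\sigma']$ times the products of weights, one identifies the tropical weight at $w$ with the total intersection multiplicity lifting to $w$.

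The hardest step will be the commutation of initial ideals with proper intersections, together with the identification of Serre's $\mathrm{Tor}$-lengths with the lattice-index weights appearing in the stable-intersection formula; this requires a careful flatness argument for the Gröbner deformation of each $X$ and $X'$, together with an analysis of how intersection multiplicities behave across a flat family whose special fiber is the degeneration over $k$. Once this local matching at each $w$ is established, gluing over all $w$ yields the equality of tropical cycles $\trop(X \cdot X') = \trop(X) \cdot \trop(X')$, and summing weights in the zero-dimensional case gives the degree statement.
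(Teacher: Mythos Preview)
The paper does not give a proof of this theorem: it is stated as a citation of \cite[Corollary~5.1.2]{osserman2013lifting} and used as a black box in the proof of Corollary~\ref{cor:positivity in the algebraic case}. There is therefore no ``paper's own proof'' to compare your attempt against.

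For what it is worth, your outline is broadly in the spirit of the Osserman--Payne argument (reduce to a local statement at $w$ via initial degenerations, identify stars with tropicalizations of initial degenerations, and then match multiplicities). A few cautions if you intend to flesh this out: the step ``$\mathrm{in}_w(I_X+I_{X'})=\mathrm{in}_w(I_X)+\mathrm{in}_w(I_{X'})$ because of properness'' is not automatic and is in fact one of the main technical results of \cite{osserman2013lifting}; it does not follow from Cohen--Macaulayness or a direct Serre criterion, and Osserman--Payne handle it via a careful analysis of extended tropicalizations and toric degenerations rather than a purely Gr\"obner-theoretic argument. Likewise, invoking the Bogart et al.\ transverse case does not by itself cover the merely proper case, which is precisely the new content of \cite{osserman2013lifting}. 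So your sketch identifies the right local picture, but the ``hardest step'' you flag is genuinely hard and is the substance of the cited paper rather than a routine flatness/$\mathrm{Tor}$ computation.
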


Here $X\cdot X'$ represents the refined intersection cycle. See also \cite{he2019generalization,osserman2011lifting} for similar results regarding the tropicalization of (partially) compactified tori, and \cite{he2018lifting} for the Berkovich-analytic setting. We end this section with a corollary about the positivity of tropicalizations of hyperplanes.

\begin{cor}\label{cor:tropical hyperplane positive}
    Suppose $H$ is a hyperplane in $\BP^m$ intersecting transversely with all torus orbits. Let $T\subset \BP^n$ be the maximal torus and $H^\circ=H\cap T$. Then $\trop(H^\circ)$ is positive in $\BR^m$. 
\end{cor}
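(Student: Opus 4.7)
The plan is to work out the facets of $\trop(H^\circ)$ explicitly and then apply the criterion from Remark~\ref{rem:positive divisors}.

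First, I would extract from the transversality hypothesis that the defining equation $\sum_{i=0}^{m} a_i x_i = 0$ of $H$ has every coefficient $a_i \neq 0$. For each torus fixed point $[\vec e_j] \in \BP^m$, the zero-dimensional orbit $\{[\vec e_j]\}$ meets $H$ transversely only if $[\vec e_j] \notin H$, i.e., $a_j \neq 0$; so nonvanishing is forced for every $j$. In the affine chart $x_0 = 1$, identify $T$ with $(K^*)^m$ and take coordinates $y_i = \val(x_i)$ on $\BR^m = \trop(T)$. With $v_i := \val(a_i) \in \BR$ and the convention $y_0 := 0$, the tropicalization $\trop(H^\circ)$ is then the corner locus of the tropical polynomial $\min_{0 \leq i \leq m}(v_i + y_i)$, that is, the set of $y$ at which this minimum is attained by at least two indices.

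Next, I would describe the facets: for each unordered pair $\{i,j\} \subset \{0,1,\dotsc,m\}$, the cell $F_{ij} = \{\, y \in \BR^m : v_i + y_i = v_j + y_j \leq v_\ell + y_\ell \text{ for all } \ell \,\}$ is an $(m-1)$-dimensional polyhedron contained in the affine hyperplane $\{v_i + y_i = v_j + y_j\}$. Since every $v_\ell$ is finite, each such $F_{ij}$ is a genuine facet of $\trop(H^\circ)$, and its linear span is $\Lin(F_{ij}) = \{\, y \in \BR^m : y_i - y_j = 0 \,\}$ (with $y_0 := 0$).

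Finally, I would apply Remark~\ref{rem:positive divisors}. For each $i = 1,\dotsc,m$ the facet $F_{0i}$ satisfies $\Lin(F_{0i}) = \{\, y : y_i = 0 \,\}$, and $\bigcap_{i=1}^{m} \{\, y_i = 0 \,\} = \{0\}$. Consequently, no nonzero linear line in $\BR^m$ can be contained in every $\Lin(P)$ as $P$ ranges over the facets of $\trop(H^\circ)$; equivalently, for every linear line $L \subset \BR^m$ there exists some $F_{0i}$ with $L \not\subset \Lin(F_{0i})$. By Remark~\ref{rem:positive divisors}, this is precisely positivity of $\trop(H^\circ)$. The only real subtlety is being careful with the convention $y_0 := 0$ and checking that transversality to every torus orbit (not just the codimension-one orbits) forces each individual $a_i$ to be nonzero; once this is in hand, the explicit facet structure of the tropical hyperplane makes the conclusion immediate.
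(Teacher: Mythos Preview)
Your proof is correct and follows essentially the same route as the paper's: both arguments extract from the transversality hypothesis that all coefficients of the defining linear form are nonzero, identify $\trop(H^\circ)$ as the corner locus of the associated tropical linear form, observe that it has a facet parallel to each coordinate hyperplane $\{y_i=0\}$, and then conclude positivity via Remark~\ref{rem:positive divisors}. Your write-up is simply more explicit about why each $F_{ij}$ is a genuine $(m-1)$-dimensional facet and about how the nonvanishing of the coefficients follows from transversality at the torus fixed points.
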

\begin{proof}
    According to Remark~\ref{rem:positive divisors}, it suffices to show that for every rational line in $\BR^m$, there is a facet of $\trop(H^\circ)$ not parallel to it. Suppose $T=\mathrm{Spec}K[x_1^{\pm},\dotsc,x_m^{\pm}]$. Then $H^\circ$ is given by an equation $\lambda_0+\lambda_1x_1+\cdots+\lambda_mx_m=0$ such that $\lambda_i\neq 0$ for all $i$. It follows that 
    $\trop(H^0)$ is the set of $(a_1,\dotsc,a_m)\in\BR^m$ such that the minimum
    $$\min\{\val(\lambda_0),\val(\lambda_1)+a_1,\cdots, \val(\lambda_m)+a_m\}$$
    is obtained at least twice. It is then easy to check that $\trop(H^\circ)$ contains a facet parallel to each coordinate hyperplane of $\BR^m$, hence it is positive.    
\end{proof}

\section{Proof of main results}\label{sec:proof}
In this section, we prove Theorem~\ref{thm:introduction}, which is re-interpreted in Theorem~\ref{thm:positivity tropical cycle}, and discuss a couple of applications.

Let $m,n,k$ be positive integers such that $m>n$. Fix a tuple of positive integers $(m_1,\dotsc,m_k)$ and a tuple of non-negative integers $(n_1,\dotsc,n_k)$ such that $m_i\geq n_i$. Furthermore, suppose that $\sum_{i\in[k]}m_i=m$ and $\sum_{i\in [k]}n_i=n$. For each subset $I$ of $[k]$, denote $n_I=\sum_{i\in I}n_i$.

Let $\CR=\BR^m=\prod_{i\in [k]}\BR^{m_i}$. For $I\subset [k]$, denote similarly $\CR_I=\prod_{i\in I}\BR^{m_i}$. Let $\pi_I\colon \CR\rightarrow \CR_I$ be the projection map. To reduce notation, for each $i\in[k]$, we write $\pi_{\{i\}}=\pi_i$. We also consider $\CR_I$ as a subspace of $\CR$ by setting the coordinates irrelevant to $I$ to be $0$.

\begin{lem}\label{lem:existence of face}
    Let $\Gamma\subset\CR$ be a translation-admissible tropical variety with dimension $n$. Suppose for any $I\subset [k]$, we have $\dim\pi_I(\Gamma)\geq n_I$. Then there is a facet $Q$ of $\Gamma$ such that $\dim\pi_I(Q)\geq n_I$ for any $I\subset [k]$. 
\end{lem}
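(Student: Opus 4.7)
The plan is to induct on the codimension $m-n$. The base case $m-n=0$ is immediate: the unique facet of $\Gamma$ is $\CR$ itself and $\dim\pi_I(\CR)=m_I\geq n_I$ for every $I$.

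For the inductive step, assume $m-n\geq 1$. Since $\sum_i n_i=n<m=\sum_i m_i$, some index $i_0\in[k]$ satisfies $n_{i_0}<m_{i_0}$. I will pick a rational vector $\vec v\in\BR^{m_{i_0}}$ carefully, form $\Gamma^+:=\Gamma+\BR\vec v$, and apply the induction hypothesis to $\Gamma^+$ with the bumped-up tuple $n_i^+=n_i+\delta_{i,i_0}$. Note $\sum n_i^+=n+1$, $n_{i_0}^+\leq m_{i_0}$, and $\Gamma^+$ is again translation-admissible because $(\Gamma+\BR\vec v)+V=\Gamma+(\BR\vec v+V)$ is pure-dimensional for every rational linear $V$. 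Once a facet $Q^+$ of $\Gamma^+$ with $\dim\pi_I(Q^+)\geq n_I^+$ for all $I$ is produced by the inductive hypothesis, the descent is clean: $\Lin(Q^+)=\Lin(Q)+\BR\vec v$ for some facet $Q$ of $\Gamma$ with $\vec v\notin\Lin(Q)$, and comparing $\pi_I(Q^+)$ to $\pi_I(Q)$ according to whether $i_0\in I$ (where $\pi_I(Q^+)=\pi_I(Q)+\BR\vec v$ has dimension at most $\dim\pi_I(Q)+1$) or $i_0\notin I$ (where $\pi_I(Q^+)=\pi_I(Q)$) gives $\dim\pi_I(Q)\geq n_I^+-1=n_I$ in either case.

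The main obstacle is choosing $\vec v$ so that (a) $\Gamma^+$ is pure of dimension $n+1$, and (b) the hypothesis $\dim\pi_I(\Gamma^+)\geq n_I^+$ continues to hold. By Lemma~\ref{lem:facet translation invariant}, condition (a) amounts to $\vec v\notin\Lin(P)$ for some facet $P$ of $\Gamma$. For (b), only the cases $i_0\in I$ with $\dim\pi_I(\Gamma)=n_I$ exactly are nontrivial; there it suffices that $\vec v\notin\Lin(F)$ for some facet $F$ of the pure-dimensional translation-admissible tropical variety $\pi_I(\Gamma)$ (which is translation-admissible by Lemma~\ref{lem:projection of translation-admissible tropical cycle}), so that $\dim\pi_I(\Gamma^+)=\dim\pi_I(\Gamma)+1=n_I+1$. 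These are finitely many proper-subspace avoidance conditions on $\vec v\in\BR^{m_{i_0}}$, and a rational $\vec v$ meeting all of them exists provided that each relevant intersection $\bigcap_P\Lin(P)\cap\BR^{m_{i_0}}$, respectively $\bigcap_F\Lin(F)\cap\BR^{m_{i_0}}$, is a proper subspace of $\BR^{m_{i_0}}$.

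Translation-admissibility enters precisely at this point. If $\Lin(P)\supset\BR^{m_{i_0}}$ for every facet $P$ of $\Gamma$, then by Lemma~\ref{lem:facet translation invariant} applied to a basis of $\BR^{m_{i_0}}$ one obtains $\Gamma+\BR^{m_{i_0}}=\Gamma$, whence $\Gamma=\pi_{[k]\setminus\{i_0\}}(\Gamma)\times\BR^{m_{i_0}}$ and $n=\dim\pi_{[k]\setminus\{i_0\}}(\Gamma)+m_{i_0}\geq n_{[k]\setminus\{i_0\}}+m_{i_0}>n_{[k]\setminus\{i_0\}}+n_{i_0}=n$, a contradiction. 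The same argument with $\pi_I(\Gamma)$ in place of $\Gamma$ and $I\setminus\{i_0\}$ in place of $[k]\setminus\{i_0\}$ handles (b), using the standing assumption $\dim\pi_{I\setminus\{i_0\}}(\Gamma)\geq n_{I\setminus\{i_0\}}$ against the equality $\dim\pi_I(\Gamma)=n_I$. Once such a $\vec v$ is chosen, the inductive step closes.
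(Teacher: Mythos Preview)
Your proof is correct and follows essentially the same route as the paper: induct on codimension, pick an index $i_0$ with $n_{i_0}<m_{i_0}$, choose a generic rational $\vec v\in\BR^{m_{i_0}}$ so that $\Gamma^+=\Gamma+\BR\vec v$ has dimension $n+1$ and satisfies the bumped hypotheses, apply induction, and descend from $Q^+$ to $Q$. The only cosmetic difference is that the paper handles every $I\ni i_0$ via the dichotomy ``$\BR^{m_{i_0}}\subset\Lin(F)$ for all facets $F$ of $\pi_I(\Gamma)$'' versus ``not'', whereas you first observe that only the equality cases $\dim\pi_I(\Gamma)=n_I$ impose a constraint and then rule out the first alternative there by the same contradiction---the substance is the same.
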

\begin{proof}
    We proceed by induction on the codimension of $\Gamma$. The base case, where $\dim \Gamma=m$, is trivial. We may then assume $n<m$. Without loss of generality, suppose $n_k<m_k$. For any $I\subset [k]$ and any facet $P$ of $\pi_I(\Gamma)$ such that $\BR^{m_k}\not\subset\Lin(P)$, the intersection $\BR^{m_k}\cap\Lin(P)$ has codimension at least one in $\BR^{m_k}$. We pick a vector $\vec u\in\BR^{m_k}$ not contained in any such intersection.   

   If $\BR^{m_k}\subset \Lin(P)$ for every facet $P$ of $\Gamma$, then $\Gamma+\BR\vec v=\Gamma$ for all $\vec v\in \BR^{m_k}$ by Lemma~\ref{lem:facet translation invariant}. Hence $\Gamma=\Gamma'\times \BR^{m_k}$ for some tropical variety $\Gamma'$ in $\CR_{[k-1]}$. In this case, $\dim\pi_{[k-1]}(\Gamma)=\dim\Gamma'=n-m_k<n_{[k-1]}$, which contradicts the assumption. Therefore, for some facet $P$ of $\Gamma$ we have $\BR^{m_k}\not\subset \Lin(P)$. Thus $\vec u\not\in \Lin(P)$ by construction. Let $\Gamma_{\vec u}=\Gamma+\BR\vec u$. Since $\Gamma$ is translation-admissible, so is $\Gamma_{\vec u}$. Again, by Lemma~\ref{lem:facet translation invariant}, $\dim\Gamma_{\vec u}=\dim \Gamma+1$. We will utilize the inductive assumption on $\Gamma_{\vec u}$. 

   First of all, let $I$ be a subset of $[k-1]$. We then have $\dim \pi_I(\Gamma_{\vec u})= \dim \pi_I(\Gamma)\geq n_I$. Secondly, let $J=I\cup\{k\}$ be a subset of $[k]$ containing $k$. Then $\pi_J(\Gamma_{\vec u})=\pi_J(\Gamma)+\BR\vec u$. By the construction of $\vec u$, either 
   \begin{enumerate}
       \item [(i)] $\BR^{m_k}\subset \Lin(P)$ for any facet $P$ of $\pi_J(\Gamma)$; or
       \item [(ii)] for some facet $P$ of $\pi_J(\Gamma)$ we have $\vec u\not\in \Lin(P)$.
   \end{enumerate}   
   Note that $\pi_J(\Gamma)$ is also a tropical variety by Lemma~\ref{lem:projection of translation-admissible tropical cycle}. 
   In case (i), we have $\pi_{J}(\Gamma)=\Gamma'\times \BR^{m_k}$ for some tropical variety $\Gamma'\subset \CR_I$ such that $\dim\Gamma'=\dim\pi_I\pi_J(\Gamma)=\dim\pi_I(\Gamma)\geq n_I$. Hence $$\dim\pi_J(\Gamma_{\vec u})\geq \dim\pi_J(\Gamma)=\dim \Gamma'+m_k\geq n_I+m_k\geq n_I+(n_k+1).$$
   In case (ii), by Lemma~\ref{lem:facet translation invariant} we still have $$\dim\pi_J(\Gamma_{\vec u})=\dim\pi_J(\Gamma)+1\geq n_J+1=n_I+(n_k+1).$$
   To sum up, let $n'_i=n_i$ for $i\in[k-1]$ and $n'_k=n_k+1$. Let $n'_I=\sum_{i\in I}n'_i$ as usual. Then $\sum_{i\in [k]} n'_i=n+1=\dim\Gamma_{\vec u}$, and $\dim \pi_I(\Gamma_{\vec u})\geq n'_I$ for any $I\subset [k]$.

   By induction, there is a facet $Q'=Q+\BR\vec u$ of $\Gamma_{\vec u}$, where $Q$ is a facet of $\Gamma$, such that $\dim\pi_I(Q')\geq n'_I$ for any $I\subset [k]$. Now if $k\not\in I$, then $\dim\pi_I(Q)=\dim\pi_I(Q')\geq n'_I=n_I$; if $k\in I$, then $\dim\pi_I(Q)\geq\dim\pi_I(Q')-1\geq n'_I-1=n_I$. Hence $Q$ is the desired face of $\Gamma$.
\end{proof}

Fix positive tropical divisors $\Lambda_i\subset \BR^{m_i}$. Let $\Lambda_i^{n_i}$ be the stable intersection of $n_i$ copies of $\Lambda_i$ in $\BR^{m_i}$ and $\prod_{i\in[k]}\Lambda_i^{n_i}$ be their direct product in $\CR$. Note that there is a natural tropical variety structure on each $\pi_i^{-1}(\Lambda_i)$ induced from $\Lambda_i$, and    $\prod_{i\in[k]}\Lambda_i^{n_i}$ is just the stable intersection of $n_i$ copies of $\pi_i^{-1}(\Lambda_i)$ for each $i\in [k]$. Let $\Gamma\subset \CR$ be a translation-admissible tropical variety of dimension $n$. Recall that $\deg^{\mathbf n}_\CR(\Gamma)$ denotes the degree of $\Gamma\cdot\prod_{i\in [k]}\Lambda_i^{n_i}$, where $\mathbf n=(n_1,\dotsc,n_k)$. We re-state Theorem~\ref{thm:introduction} in  the following form:

\begin{thm}\label{thm:positivity tropical cycle}
     Let $\Gamma\subset \CR$ be a translation-admissible tropical variety of dimension $n$. 
    Then $\Gamma\cdot\prod_{i\in[k]}\Lambda_i^{n_i}>0$ if and only if $\dim\pi_I(\Gamma)\geq n_I$ for all $I\subset [k]$.
\end{thm}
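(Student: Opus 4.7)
The implication ($\Rightarrow$) I plan to establish directly from Lemma~\ref{lem:positive stable intersection}(3). If $\Gamma\cdot\prod_{i\in[k]}\Lambda_i^{n_i}>0$, that lemma produces facets $Q$ of $\Gamma$ and $P=\prod_i P_i$ of $\prod_i\Lambda_i^{n_i}$ with $\Lin(Q)+\prod_i\Lin(P_i)=\CR$. Projecting this equality by $\pi_I$ gives $\pi_I(\Lin(Q))+\prod_{i\in I}\Lin(P_i)=\CR_I$, and since $\dim\Lin(P_i)=m_i-n_i$, a dimension count forces $\dim\pi_I(\Gamma)\geq\dim\pi_I(Q)\geq m_I-\sum_{i\in I}(m_i-n_i)=n_I$.

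The converse is the substantive direction. The first step is to invoke Lemma~\ref{lem:existence of face} to extract a single facet $Q$ of $\Gamma$ with $\dim\pi_I(Q)\geq n_I$ for every $I\subset[k]$ simultaneously. By the reverse implication of Lemma~\ref{lem:positive stable intersection}(3), the theorem then reduces to exhibiting facets $P_i$ of $\Lambda_i^{n_i}$ with $\Lin(Q)+\prod_i\Lin(P_i)=\CR$. Since a facet of $\Lambda_i^{n_i}$ is, up to translation, an intersection $\bigcap_{j=1}^{n_i}\Lin(F_{i,j})$ of the linear spans of $n_i$ facets of $\Lambda_i$ in general position, dualizing turns this into the following selection problem: setting $U=\Lin(Q)$ and letting $\Lambda_i^*\subset(\BR^{m_i})^*$ denote the set of facet-functionals of $\Lambda_i$, choose $\ell_{i,1},\dots,\ell_{i,n_i}\in\Lambda_i^*$ for each $i$ so that the $n$ restrictions $\ell_{i,j}\circ\pi_i|_U$ form a basis of $U^*$.

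I plan to obtain this selection from Rado's theorem, applied to the linear matroid on $U^*$ and the $n_i$-fold replicated families $\{\ell\circ\pi_i|_U:\ell\in\Lambda_i^*\}$. By Remark~\ref{rem:positive divisors}, positivity of $\Lambda_i$ is equivalent to $\Lambda_i^*$ spanning $(\BR^{m_i})^*$; consequently, for every $I\subset[k]$, the union $\bigcup_{i\in I}\{\ell\circ\pi_i|_U:\ell\in\Lambda_i^*\}$ spans $(\ker\pi_I|_U)^\perp\subset U^*$, whose dimension equals $\dim\pi_I(U)\geq n_I$. This is exactly the rank inequality Rado requires, so the transversal exists. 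Linear independence of the $\ell_{i,j}$ upstairs in $(\BR^{m_i})^*$ (needed so that $\bigcap_j\Lin(F_{i,j})$ is genuinely the linear span of a facet of $\Lambda_i^{n_i}$) then propagates automatically from linear independence of the restrictions.

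I expect the combinatorial selection step to be the main obstacle. A naive one-functional-at-a-time greedy argument does not obviously preserve the Hall-type invariant $\dim\pi_I(U')\geq n_I'$ after each partial intersection, because positivity of $\Lambda_i$ only guarantees avoidance of a single given line at a time, whereas several non-trivial subspaces may need to be simultaneously avoided when updating. The matroid formulation circumvents this: the spanning property of $\Lambda_i^*$, together with the fact that a real vector space is not a finite union of proper subspaces, handles all the avoidance constraints at once through the uniform rank bound furnished by Lemma~\ref{lem:existence of face}.
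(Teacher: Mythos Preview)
Your Rado-based plan is a genuinely different route from the paper's for the ($\Leftarrow$) direction: the paper reduces both $\Gamma$ and the $\Lambda_i$ to linear data and then reruns the $\vec u$-translation induction from the proof of Lemma~\ref{lem:existence of face} a second time, whereas you package that combinatorics into a single matroid transversal. Your ($\Rightarrow$) argument via Lemma~\ref{lem:positive stable intersection}(3) is also fine (the paper instead argues the contrapositive by a disjointness-after-translation count).

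There is, however, a real gap in your ($\Leftarrow$) argument. You reduce to finding a facet $P_i$ of $\Lambda_i^{n_i}$ with $\Lin(P_i)=\bigcap_j\ker\ell_{i,j}$, implicitly asserting that \emph{every} transverse intersection of $n_i$ facet-hyperplanes of $\Lambda_i$ arises as the linear span of a facet of $\Lambda_i^{n_i}$. That is false. Take $k=1$, $m=3$, $n=2$, and let $\Lambda$ be the standard tropical plane in $\BR^3$, with rays $e_1,e_2,e_3,e_0:=-e_1-e_2-e_3$ and six $2$-dimensional facets spanned by pairs of rays. The facets $F_1=\BR_{\geq 0}e_2+\BR_{\geq 0}e_3$ and $F_2=\BR_{\geq 0}e_1+\BR_{\geq 0}e_0$ have independent normal functionals $e_1^*$ and $e_2^*-e_3^*$, with common kernel $\BR(e_2+e_3)$. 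But $\Lambda^2$ is the standard tropical line, whose four rays have linear spans $\BR e_1,\BR e_2,\BR e_3,\BR(e_1+e_2+e_3)$; none of these is $\BR(e_2+e_3)$. So Rado may hand you a selection that does not correspond to any facet of $\Lambda_i^{n_i}$, and Lemma~\ref{lem:positive stable intersection}(3) cannot be invoked as written.

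The fix is easy and keeps your approach intact. Bypass Lemma~\ref{lem:positive stable intersection}(3) altogether: having chosen facets $F_{i,j}$ of $\Lambda_i$ via Rado, translate the $n$ divisors $\pi_i^{-1}(\Lambda_i)$ \emph{separately} by vectors $v_{i,j}$ so that a fixed point $p\in U=\Lin(Q)$ lies in the relative interior of each $\pi_i^{-1}(F_{i,j})+v_{i,j}$. At $p$ the intersection of $U$ with all these translated facets is transverse by construction, so $p$ occurs in the stable intersection $U\cdot\prod_{i,j}\bigl(\pi_i^{-1}(\Lambda_i)+v_{i,j}\bigr)$ with positive weight; Proposition~\ref{prop:basic property tropical intersection} then gives $U\cdot\prod_i\Lambda_i^{n_i}>0$. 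Alternatively, first replace each $\Lambda_i$ by a union of linear hyperplanes through the origin (as the paper does before its induction); for such $\Lambda_i$ your facet claim \emph{is} true, and the rest of your argument goes through unchanged.
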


\begin{ex}\label{ex:non translation-admissible}
We provide a couple of examples of tropical varieties that are not translation-admissible and fail Theorem~\ref{thm:positivity tropical cycle}. Note that the second one is connected through codimension 1, while the first one is not.

    Consider firstly the case $k=2,\ m_1=m_2=2,$ and $n_1=n_2=1$. Let $\vec e_1,\vec e_2$ be a basis of $\BR^{m_1}$ and $\vec e_3,\vec e_4$ be a basis of $\BR^{m_2}$. 
    Let $\Gamma$ be the union of two planes $P_1$ spanned by $\vec e_1,\vec e_2$ and $P_2$ by $\vec e_3,\vec e_4$. Then $\Gamma$ is not translation-admissible as $\Gamma+\BR\vec e_4$ is not pure-dimensional. Since $P_1\cap P_2$ is the origin, $\Gamma$ is not connected through codimension 1, either. 
    Let $\Lambda_i\subset\BR^{m_i}$ be an arbitrary positive tropical divisor. Clearly, $\dim\Gamma=2=n_1+n_2$ and $\dim\pi_i(\Gamma)=2>n_i$ for each $i=1,2$. On the other hand, since $\pi_2(P_1)$ is a single point, $P_1$ is disjoint with a general translation of $\pi_2^{-1}(\Lambda_2)$. Therefore, $P_1\cdot (\Lambda_1\times\Lambda_2)=0$. Similarly, $P_2\cdot (\Lambda_1\times\Lambda_2)=0$. Hence $\Gamma\cdot (\Lambda_1\times\Lambda_2)=0$. Therefore, $\Gamma$ violates Theorem~\ref{thm:positivity tropical cycle}.

    Now consider $k=4$, $m_1=2,m_2=m_3=1$, and $n_1=n_3=1, n_2=0$. Pick a basis $\vec e_1,\vec e_2$ of $\BR^{m_1}$, and $\vec e_3,\vec e_4$ of $\BR^{m_2}$ and $\BR^{m_3}$, respectively. Let $Q_1\subset \BR^3=\BR^{m_1}\times \BR^{m_2}$ be the standard tropical plane. In other words, $Q_1$ is the union of the following $6$ facets:
    $$\BR_{\geq 0}\vec e_1+\BR_{\geq 0}\vec e_2,\ \BR_{\geq 0}\vec e_2+\BR_{\geq 0}\vec e_3, \ \BR_{\geq 0}\vec e_1+\BR_{\geq 0}\vec e_3,$$ and $$\BR_{\geq 0}\vec e_i+\BR_{\geq 0}(-\vec e_1-\vec e_2-\vec e_3), \mathrm{\ where\ } 1\leq i\leq 3.$$ Let $Q_2=\BR\vec e_3+\BR\vec e_4$. Then $Q_1\cap Q_2=\BR_{\geq 0}\vec e_3$, and hence the tropical variety $\Gamma:=Q_1\cup Q_2$ is connected through codimension $1$. However, $\Gamma$ is not translation-admissible, as $\Gamma+\BR\vec e_3$ is not pure-dimensional. Let $\Lambda_i\subset \BR^{m_i}$ be an arbitrary positive tropical divisor, where $i=1,3$. By construction, we have $\dim\pi_1(\Gamma)=2>n_1$, $\dim\pi_3(\Gamma)=1=n_3$ and $\dim\pi_{\{1,3\}}(\Gamma)=2=n_1+n_3$. We claim that $\Gamma\cdot (\Lambda_1\times \BR^{m_2} \times \Lambda_3)=0$, hence $\Gamma$ fails  Theorem~\ref{thm:positivity tropical cycle}. 
    Indeed, since $\dim \Lambda_3=0$, the tropical variety $\Lambda_1\times \BR^{m_2} \times \Lambda_3$ is parallel to $\BR^{m_1}\times \BR^{m_2}$, hence has trivial intersection number with $Q_1$. On the other hand, note that $\pi_1(Q_2)$ is the origin in $\BR^{m_1}$. Hence $Q_2$ is disjoint with a general translation of $\pi_1^{-1}(\Lambda_1)$, and therefore has trivial intersection number with $\Lambda_1\times \BR^{m_2} \times \Lambda_3$. This proves the claim.
\end{ex}

\begin{proof}[Proof of Theorem~\ref{thm:positivity tropical cycle}]
    Suppose $\dim\pi_I(\Gamma)< n_I$ for some $I\subset [k]$. Then for a general $\vec v\in \CR_I$, the tropical variety $\pi_I(\Gamma)+\vec v$ is disjoint with $\prod_{i\in I}\Lambda_i^{n_i}$. Hence $\Gamma+\vec v$ is disjoint with $\prod_{i\in [k]}\Lambda_i^{n_i}$ and we have 
$$ \Gamma\cdot\prod_{i\in[k]}\Lambda_i^{n_i}=\Big(\Gamma+\vec v\Big)\cdot\prod_{i\in[k]}\Lambda_i^{n_i}=0.$$

    We now assume that $\dim\pi_I(\Gamma)\geq n_I$ for all $I\subset [k]$ and prove that the stable intersection is positive. By Lemma~\ref{lem:existence of face}, there is a facet $P_\Gamma$ of $\Gamma$ such that $\dim\pi_I(P_\Gamma)\geq n_I$ for all $I\subset [k]$. According to Lemma~\ref{lem:positive stable intersection}, we may replace $\Gamma$ with $\Lin(P_\Gamma)$ and thus assume that $\Gamma$ is a linear subspace of $\CR$. 
    On the other hand, after replacing each $\Lambda_i$ with its recession cycle if necessary, we may assume that $\Lambda_i$ is a fan for each $i$. Moreover, by Lemma~\ref{lem:positive stable intersection}, we may assume that for each face $P$ of $\Lambda_i$, we have $\Lin (P)\subset \Lambda_i$. In other words, $\Lambda_i$ is a union of linear hyperplanes in $\BR^{m_i}$. Lastly, by Remark~\ref{rem:positive divisors}, we may assume that $\Lambda_i$ is a union of $m_i$ linear hyperplanes in $\BR^{m_i}$ whose intersection is trivial.

    As in the proof of Lemma~\ref{lem:existence of face}, we proceed by induction on the codimension $m-n$ of $\Gamma$. If $n=m$, then $n_i=m_i$ for each $i$. Since $\Lambda_i^{m_i}>0$, we have  $\Gamma\cdot\prod_{i\in[k]}\Lambda_i^{n_i}=\prod_{i\in[k]}\Lambda_i^{m_i}>0.$ 
    
    We now assume $n<m$. Without loss of generality, suppose $n_k<m_k$. A similar argument as in Lemma~\ref{lem:existence of face}, combined with the inductive hypothesis, shows that there is a vector $\vec u\in \BR^{m_k}$ such that $\Gamma_{\vec u}:=\Gamma+\BR\vec u$ has dimension $n+1$, and $$\Gamma_{\vec u}\cdot\Big(\Lambda_k^{n_k+1}\times\prod_{i\in[k-1]}\Lambda_i^{n_i}\Big)>0.$$ By Lemma~\ref{lem:positive stable intersection}, there are facets $P_i$ of $\Lambda_i^{n_i}$ for $i\in[k-1]$ and $P_k$ of $\Lambda_k^{n_k+1}$ such that $$\Gamma+\BR\vec u+\sum_{i\in[k]}\Lin (P_i)=\Gamma_{\vec u}+\sum_{i\in[k]}\Lin (P_i)=\CR.$$
    Here $\sum_{i\in[k]}\Lin (P_i)$ is just the linear subspace of $\CR$ parallel to the facet $\prod_{i\in [k]}P_i$ of $\Lambda_k^{n_k+1}\times\prod_{i\in[k-1]}\Lambda_i^{n_i}$.
    Similarly, let $P$ be a facet of $\Lambda_k^{n_k}$. Then $P\times \prod_{i\in[k-1]}P_i$ is a facet of $\prod_{i\in [k]}\Lambda_i^{n_i}$, and it is parallel to $\sum_{i\in[k-1]}\Lin (P_i)+\Lin(P)$.
    Let us write $$L(P):=\Gamma+\sum_{i\in[k-1]}\Lin (P_i)+\Lin(P)$$ for all facet $P$ of $\Lambda_k^{n_k}$. By Lemma~\ref{lem:positive stable intersection}, it suffices to show  
    that there is a such $P$ with $L(P)=\CR$.

    By assumption, $\Lambda_k$ is the union of $m_k$ linear hyperplanes $\{H_i\}_{1\leq i\leq m_k}$ in $\BR^{m_k}$ which intersect trivially. Then there are $n_k+1$ such hyperplanes, say $H_1,\dotsc,H_{n_k+1}$, such that $P_k$ is a facet of $\bigcap_{1\leq i\leq n_k+1}H_i$. On the other hand,
    there are $n_k+1$ facets $Q_1,\dotsc, Q_{n_k+1}$ of $\Lambda_k^{n_k}$ containing $P_k$, where each $Q_j$ is a facet of the intersection of $n_k$ hyperplanes in $H_1,\dotsc, H_{n_k+1}$. We claim that there must be a $Q_j$ such that $L(Q_j)=\CR$. Note that $L(Q_j)$ contains $\Gamma+\sum_{i\in [k]}\Lin(P_i)$, which has dimension $m-1$. If $L(Q_j)\neq \CR$ for each $j$, then $L(Q_j)=\Gamma+\sum_{i\in [k]}\Lin(P_i)$. Hence
    $$\Gamma+\sum_{i\in [k]}\Lin(P_i)+\sum_{j\in[n_k+1]}\Lin(Q_j)=\Gamma+\sum_{i\in [k]}\Lin(P_i)\subsetneq \CR.$$
    On the other hand, it is easy to check that $\sum_{j\in [n_k+1]}\Lin (Q_j)=\BR^{m_k}$, which contains $\vec u$. Hence the left hand of the formula contains $\Gamma+\BR\vec u+\sum_{i\in[k]}\Lin (P_i)=\CR.$ This is a contradiction, and we are done. 
\end{proof}

Let $\mathrm{MSupp}(\Gamma):=\{\mathbf n=(n_1,\dotsc,n_k)\in\BZ^k_{\geq 0}|\deg^{\mathbf n}_\CR(\Gamma)>0\}$. By Theorem~\ref{thm:positivity tropical cycle},
$$\mathrm{MSupp}(\Gamma)=\{(n_1,\dotsc,n_k)\in\BZ^k_{\geq 0}\big|\sum_{i\in [k]}n_i=\dim\Gamma \mathrm{\ and\ } \sum_{i\in I}n_i\leq \dim\pi_I(\Gamma)\mathrm{\ for\ all\ }I\subset [k]\}.$$
The following proposition then shows that $\mathrm{MSupp}(\Gamma)$ is a polymatroid (\cite[Definition 2.14]{castillo2020multidegrees}).

\begin{prop}\label{prop:submodular}
    Let $\Gamma\subset \CR$ be a translation-admissible tropical variety of dimension $n$. For any $I,J\subset [k]$, we have 
    $$\dim \pi_{I\cap J}(\Gamma)+\dim \pi_{I\cup J}(\Gamma)\leq \dim \pi_I(\Gamma)+\dim \pi_J(\Gamma).$$
\end{prop}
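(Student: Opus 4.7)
The plan is to slice $\Gamma$ by a generic fiber of the projection $\pi_{I\cap J}$ and reduce the submodular inequality to an elementary set-theoretic bound. The key ingredient is the following \emph{generic fiber dimension} statement for translation-admissible tropical varieties: for any such $\Lambda\subset\CR$ and any rational linear projection $\rho\colon\CR\to\CR'$, there is a dense open subset $U\subset\rho(\Lambda)$ with $\dim(\Lambda\cap\rho^{-1}(z))=\dim\Lambda-\dim\rho(\Lambda)$ for every $z\in U$. Pure-dimensionality of $\Lambda$ makes this work: writing $\Lambda$ as a union of facets $P$ of dimension $\dim\Lambda$, those with $\dim\rho(P)=\dim\rho(\Lambda)$ contribute fibers of the expected dimension over the relative interior of $\rho(P)$, while facets of strictly smaller projection lie in a lower-dimensional subset of $\rho(\Lambda)$ that generic $z$ avoids.

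By Lemma~\ref{lem:projection of translation-admissible tropical cycle}, each $\pi_{I'}(\Gamma)$ with $I'\in\{I,J,I\cup J\}$ is translation-admissible in $\CR_{I'}$, and each further projects onto $\pi_{I\cap J}(\Gamma)$. Applying the generic fiber dimension statement to each of these three projections and intersecting the corresponding dense open subsets of $\pi_{I\cap J}(\Gamma)$, I pick a common $z\in\pi_{I\cap J}(\Gamma)$ satisfying
$$\dim\bigl(\pi_{I'}(\Gamma)\cap\pi_{I\cap J}^{-1}(z)\bigr)=\dim\pi_{I'}(\Gamma)-\dim\pi_{I\cap J}(\Gamma)$$
for each $I'\in\{I,J,I\cup J\}$. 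Next, the slice $\pi_{I\cup J}(\Gamma)\cap\pi_{I\cap J}^{-1}(z)$ sits inside $\{z\}\times\CR_{I\setminus J}\times\CR_{J\setminus I}\subset\CR_{I\cup J}$. Dropping the fixed $z$-coordinate, it injects into the Cartesian product $\pi_{I\setminus J}\bigl(\pi_I(\Gamma)\cap\pi_{I\cap J}^{-1}(z)\bigr)\times\pi_{J\setminus I}\bigl(\pi_J(\Gamma)\cap\pi_{I\cap J}^{-1}(z)\bigr)$, whose two factors have the same dimensions as $\pi_I(\Gamma)\cap\pi_{I\cap J}^{-1}(z)$ and $\pi_J(\Gamma)\cap\pi_{I\cap J}^{-1}(z)$ respectively (the dropped coordinate contributes trivially).

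Combining this set-theoretic inclusion with the three fiber-dimension formulas yields
$$\dim\pi_{I\cup J}(\Gamma)-\dim\pi_{I\cap J}(\Gamma)\leq\bigl(\dim\pi_I(\Gamma)-\dim\pi_{I\cap J}(\Gamma)\bigr)+\bigl(\dim\pi_J(\Gamma)-\dim\pi_{I\cap J}(\Gamma)\bigr),$$
which rearranges to the claimed submodularity. The most delicate step is the generic fiber dimension statement, but this reduces to the polyhedral observation above; the remainder is linear algebra of coordinate splittings and routine set inclusions. As an alternative, one could attempt a direct induction on the codimension of $\Gamma$ (mirroring the proof of Theorem~\ref{thm:positivity tropical cycle}), but the fibered slicing strategy is more conceptual and avoids a case analysis on which $\epsilon_{I'}\in\{0,1\}$ gets added when enlarging $\Gamma$ by $\BR\vec u$.
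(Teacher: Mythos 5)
Your proof is correct and takes a genuinely different route from the paper's. You slice $\Gamma$ by a generic fiber of $\pi_{I\cap J}$, use a generic-fiber-dimension lemma for pure-dimensional polyhedral complexes (applied to the three projections $\pi_I(\Gamma)$, $\pi_J(\Gamma)$, $\pi_{I\cup J}(\Gamma)$ over $\pi_{I\cap J}(\Gamma)$), and then deduce the inequality from the set-theoretic inclusion of the slice of $\pi_{I\cup J}(\Gamma)$ into a product of slices. The paper instead first uses Lemma~\ref{lem:projection of translation-admissible tropical cycle} to replace $\Gamma$ by $\pi_{I\cup J}(\Gamma)$, reducing to the case $I\cup J=[k]$, and then argues facet by facet: for a facet $P$ one has $\ker\pi_I|_{\Lin(P)}\oplus\ker\pi_J|_{\Lin(P)}\subset\ker\pi_{I\cap J}|_{\Lin(P)}$ (the sum is direct precisely because $I\cup J=[k]$ forces $\ker\pi_I\cap\ker\pi_J=0$), which gives $\dim\pi_{I\cap J}(P)+n\leq\dim\pi_I(P)+\dim\pi_J(P)$, and then one takes the maximum over facets. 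Both arguments lean on Lemma~\ref{lem:projection of translation-admissible tropical cycle} to guarantee pure-dimensionality of the relevant projections, but the paper's version trades your auxiliary generic-fiber lemma for a one-line linear-algebra observation on a single facet; the reduction to $I\cup J=[k]$ is what lets it pass from per-facet inequalities to the global one cleanly (so that $\dim\pi_{I\cup J}(P)=n$ is independent of $P$). Your approach is slightly longer but closer in spirit to the fibered-dimension arguments one would use in the algebro-geometric setting, and it avoids the need to normalize $I\cup J$. One small note: the ``alternative'' you float at the end, an induction on codimension mirroring Theorem~\ref{thm:positivity tropical cycle}, is not in fact what the paper does here; its proof of this proposition is direct.
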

\begin{proof}
    By Lemma~\ref{lem:projection of translation-admissible tropical cycle}, $\pi_{I\cup J}(\Gamma)$ is still translation-admissible. Therefore, we may assume $I\cup J=[k]$ and hence $\pi_{I\cup J}(\Gamma)=\Gamma$. The inequality then becomes
     \begin{equation}\label{eq:dimension}
         \dim \pi_{I\cap J}(\Gamma)+n\leq \dim \pi_I(\Gamma)+\dim \pi_J(\Gamma).
     \end{equation}
     Since $I\cup J=[k]$, we have $\ker\pi_I\cap\ker\pi_J=\{0\}$.
     Let $P$ be any facet of $\Gamma$. Then $$\ker\pi_I|_{\Lin(P)}\oplus\ker\pi_J|_{\Lin(P)}\subset \ker\pi_{I\cap J}|_{\Lin(P)}.$$
     It follows that
      $$\dim \pi_{I\cap J}(\Lin(P))+n\leq \dim \pi_I(\Lin(P))+\dim \pi_J(\Lin(P)).$$
     Since $\dim\pi_I(P)=\dim\pi_I(\Lin(P))$ for each $I\subset [k]$, we get 
     $$\dim \pi_{I\cap J}(P)+n\leq \dim \pi_I(P)+\dim \pi_J(P).$$
     Taking maximum among all facets of $\Gamma$, we conclude inequality~(\ref{eq:dimension}).
\end{proof}

Proposition~\ref{prop:submodular} also implies that every translation-admissible tropical variety $\Gamma$ in $\BR^k$ gives a usual matroid on $[k]$ with the rank function given by the dimensions of projections of $\Gamma$.

To end the paper, we apply Theorem~\ref{thm:positivity tropical cycle} to give a tropical proof of Theorem~\ref{thm:castillo}.  Let $Y=\prod_{i\in [k]} Y_i$ be a product of smooth projective varieties. Suppose each $Y_i$ is embedded into $\BP^{m_i}$ and let $H_i\subset Y_i$ be the hyperplane class on $Y_i$. For each $I\subset [k]$, let $p_I\colon \prod_{i\in [k]} \BP^{n_i}\rightarrow\prod_{i\in I} \BP^{n_i}$ be the projection map. Write $p_i=p_{\{i\}}$ for $i\in [k]$ for simplicity. Then Theorem~\ref{thm:castillo} is a special case of the following corollary.

\begin{cor}\label{cor:positivity in the algebraic case}
   Let $Y$ be as above and $X$ an (irreducible) closed subvariety of $Y$ of dimension $n$. Then $$X\cdot (p_1^*H_1)^{n_1}\cdots (p_k^*H_k)^{n_k}>0$$ if and only if $\dim p_I(X)\geq n_I$ for any $I\subset [k]$.
\end{cor}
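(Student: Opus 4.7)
The strategy is to reduce the algebraic statement to its tropical counterpart via Theorem~\ref{thm:lifting tropical intersections}, and then invoke Theorem~\ref{thm:positivity tropical cycle}. I first reduce to the ambient space $Y = \prod_{i\in[k]}\BP^{m_i}$: each class $p_i^*H_i$ on $Y$ is the restriction of the pullback of the hyperplane class from $\BP^{m_i}$ to the multiprojective space, so the intersection number $X\cdot\prod_{i}(p_i^*H_i)^{n_i}$ and the projection dimensions $\dim p_I(X)$ depend only on $X$ viewed as a subvariety of $\prod_{i}\BP^{m_i}$.

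For each $i\in[k]$, pick a hyperplane $H_i\subset\BP^{m_i}$ meeting every torus orbit transversely, so that by Corollary~\ref{cor:tropical hyperplane positive} the tropicalization $\Lambda_i := \trop(H_i\cap T_i)$ is a positive tropical divisor in $\BR^{m_i}$, where $T_i$ denotes the maximal torus in $\BP^{m_i}$ and $T = \prod_i T_i$. After replacing $X$ by a generic $T$-translate (which preserves the numerical equivalence class, hence both the intersection number and the dimensions of projections), I may assume $X^\circ := X\cap T$ is non-empty. By Lemma~\ref{lem:tropicalization translation-admissible}, $\Gamma := \trop(X^\circ)$ is then a translation-admissible tropical variety of dimension $n$ in $\CR$; and since $p_I(X^\circ)$ is a dense open subset of $p_I(X)$, one has $\pi_I(\Gamma)=\trop(p_I(X^\circ))$, so $\dim\pi_I(\Gamma)=\dim p_I(X)$ for every $I\subset[k]$.

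Finally, by taking $n_i$ sufficiently generic $T_i$-translates $H_i^{(j)}$ of $H_i$ for each $i$, the corresponding tropicalizations $\pi_i^{-1}(\trop(H_i^{(j)}\cap T_i))$ meet $\Gamma$ properly in $\CR$. Theorem~\ref{thm:lifting tropical intersections} then equates
\[ \deg\!\bigl(X \cdot \textstyle\prod_{i\in[k]}(p_i^*H_i)^{n_i}\bigr) \;=\; \deg\!\bigl(\Gamma \cdot \textstyle\prod_{i\in[k]}\Lambda_i^{n_i}\bigr), \]
and Theorem~\ref{thm:positivity tropical cycle} identifies positivity of the right-hand side with the inequalities $\dim\pi_I(\Gamma)\geq n_I$ for all $I\subset[k]$, which translate back to the desired $\dim p_I(X)\geq n_I$.

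The delicate point I foresee is the step of translating $X$ to meet the open torus: this fails if $X$ is contained in a torus-invariant boundary divisor $D$ of $\prod_i\BP^{m_i}$, since $D$ is $T$-stable. I would handle this by induction on the ambient dimension, restricting the whole setup to $D$ (itself a product of smaller toric varieties to which the hyperplane classes restrict compatibly); alternatively, one may invoke the versions of the tropical lifting theorem on (partially) compactified tori from \cite{he2019generalization,osserman2011lifting} mentioned just after Theorem~\ref{thm:lifting tropical intersections}, which allow one to tropicalize $X$ directly inside the multiprojective space without first moving into the open torus.
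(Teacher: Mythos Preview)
Your approach is essentially the same as the paper's: reduce to $Y=\prod_i\BP^{m_i}$, choose hyperplanes $H_i$ transverse to the torus orbits so that $\Lambda_i=\trop(H_i\cap T_i)$ is positive, pick torus-generic translates so that the tropical intersections are proper, apply the Osserman--Payne lifting theorem to equate the algebraic and tropical intersection numbers, and finish with Theorem~\ref{thm:positivity tropical cycle} together with $\dim\pi_I(\trop(X^\circ))=\dim p_I(X)$.

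Two remarks. First, your step of replacing $X$ by a generic $T$-translate is vacuous: if $X$ already meets $T$ it is unnecessary, and if $X$ is contained in the toric boundary then every $T$-translate is as well, so it accomplishes nothing (as you yourself note). The paper never translates $X$; it only translates the hyperplanes (by $t_{i,j}\in T_i$ chosen generic \emph{both} in the Zariski sense and so that $\trop(t_{i,j})$ is generic in $\BR^{m_i}$), and then invokes Kleiman's transversality to ensure that the algebraic intersection $X\cap Z$ is proper and supported in $T$.

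Second, your treatment of the case $X\subset\partial Y$ is in fact more explicit than the paper's: the paper's claim that ``the intersection of $X$ with $Z$ is \ldots\ disjoint with the toric boundary'' tacitly assumes $X\cap T\neq\emptyset$. Your proposed induction on the ambient dimension is a correct fix, but there is a simpler one: since the intersection number and the projection dimensions depend only on the hyperplane classes and not on the specific torus, one may first choose homogeneous coordinates on each $\BP^{m_i}$ so that $p_i(X)$ meets the open torus $T_i$ (equivalently, act by a general element of $\prod_i\mathrm{PGL}_{m_i+1}$), after which $X\cap T$ is automatically dense in $X$ and no further case analysis is needed.
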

\begin{proof}
  Let $H_i'$ be the hyperplane class in $\BP^{m_i}$. Since the degree of the intersection product in the statement is the same as the degree of the intersection product of $X$ with each $(p_i^*H'_i)^{n_i}$ inside $\prod_{i\in [k]} \BP^{m_i}$, we may just assume that $Y=\prod_{i\in [k]} \BP^{m_i}$ and $H_i=H'_i$.
  
   Let $T_i$ be the maximal torus of each $\BP^{m_i}$, and $T$ the maximal torus of $Y$, which is the product of all $T_i$. We choose a suitable representative of each $H_i$ that intersects transversely with the torus orbits of  $\BP^{m_i}$.  Let $H^\circ_i=H_i\cap T_i$. By Corollary~\ref{cor:tropical hyperplane positive}, $\trop (H_i^\circ)$ is positive in $\BR^{m_i}$.

   Let $t_{i,j}\in T_i$ be general points such that $\trop(t_{i,j})$ is general in $\BR^{m_i}$, where $j\in[n_i]$. This is possible since the points in $\mathbb G_m$ with fixed valuation are Zariski dense. Since $H_i$ intersects transversely with the torus orbits of $\BP^{m_i}$, by the transversality of a general translate (\cite[Corollary 4]{kleiman1974transversality}), the intersection of all $t_{i,j} H_i$s is proper and hence  transverse. As a result, the intersection cycle $(p_1^*H_1)^{n_1}\cdots (p_k^*H_k)^{n_k}$ is represented by $$Z:=\bigcap_{i\in[k]}\bigcap_{j\in[n_i]}p_i^{-1}(t_{i,j}H_i).$$
   Moreover, the intersection of $X$ with  $Z$
   is proper and disjoint with the toric boundary of $Y$.
   Similarly,  the tropicalizations  $$\trop(t_{i,j} H^\circ_i)=\trop(t_{i,j})+\trop(H^\circ_i)$$ also intersect transversely. 

   Let $Z^\circ=Z\cap T$ and $X^\circ=X\cap T$. By the transversality of the tropical intersections discussed above, $\trop(Z^\circ)$ is just the stable intersection of all $\pi_{i}^{-1}(\trop(t_{i,j} H^\circ_i))$ according to Theorem~\ref{thm:lifting tropical intersections}, which is bounded rationally equivalent to the direct product $\prod_{i\in[k]}\trop(H_i^\circ)^{n_i}$. 
   By the same theorem,  we have 
   $$\deg \big(X\cdot Z\big)=\deg \big(X^\circ\cdot Z^\circ\big)=\deg \big(\trop(X^\circ)\cdot\trop(Z^\circ)\big)=\deg \big(\trop(X^\circ)\cdot \prod_{i\in[k]}\trop(H_i^\circ)^{n_i}\big).$$
   By Theorem~\ref{thm:positivity tropical cycle}, this is positive if and only if $\dim\pi_I(\trop(X^\circ))\geq n_I$ for each $I\subset [k]$, which is equivalent to $\dim p_I(X)=\dim p_I(X^\circ)\geq n_I$ for each $I\subset [k]$.
\end{proof}




 \bibliographystyle{abbrv}
 \bibliography{1}

\end{document}